\def\XXint#1#2#3{{\setbox0=\hbox{$#1{#2#3}{\int}$ }
\vcenter{\hbox{$#2#3$ }}\kern-.6\wd0}}
\numberwithin{equation}{section}
\theoremstyle{plain}
\newtheorem*{theorem*}{Theorem}
\newtheorem*{lemma*}{Lemma}
\newtheorem{theorem}{Theorem}
\newtheorem{lemma}{Lemma}[section]
\newtheorem{proposition}[lemma]{Proposition}
\theoremstyle{definition}
\newtheorem{definition}[lemma]{Definition}
\newtheorem{remark}[lemma]{Remark}
\def\RR{\mathbb{R}}
\newcommand{\pd}{\partial}
\newcommand{\MM}{\mathcal{M}_a}
\newcommand{\beq}{\begin{equation}}
\newcommand{\eeq}{\end{equation}}
\newcommand{\bal}{\begin{align}}
\newcommand{\eal}{\end{align}}
\newcommand{\bals}{\begin{align*}}
\newcommand{\eals}{\end{align*}}
\begin{document}
\title{Stable self-similar blowup for a family of nonlocal transport equations}
\author{ Tarek M. Elgindi, Tej-eddine Ghoul, Nader Masmoudi}
\maketitle

\begin{abstract}
We consider a family of non-local problems that model the effects of transport and vortex stretching in the incompressible Euler equations. Using modulation techniques, we establish \emph{stable} self-similar blow-up near a family of known self-similar blow-up solutions. 
\end{abstract}
\section{Introdution}
The dynamics of solutions to the 3D incompressible Euler equations is guided by many effects which are still not properly understood. Among these effects are: \begin{itemize}
\item Non-locality,
\item Transport,
\item Vortex Stretching.
\end{itemize}
"Non-locality" is physically clear: in an ideal fluid, any disturbance in one location is immediately felt everywhere. "Transport" refers to the fact that while vortices produce a velocity field, they are also carried by that velocity field to different locations in space. "Vortex Stretching" is the process by which vortices are enhanced due to variations in the velocity gradient in the direction of the vortex. This is succinctly captured in the 3D Euler system as follows: 
\begin{equation}
\left\{\begin{array}{l}
\pd_t\omega+u\cdot\nabla\omega=\omega\cdot\nabla u,\\
u=\nabla\times(-\Delta)^{-1}\omega.
\end{array}
 \right.
\end{equation} 
Non-locality is described by the relation $u=\nabla\times(-\Delta)^{-1}\omega$ (called the Biot-Savart law) while the time-evolution of the vorticity $\omega$ is determined by the transport term $u\cdot\nabla\omega$ and the vortex-stretching term $\omega\cdot\nabla u$. Notice that the incompressible Euler equation is a system of three equations and that each equation contains seven terms all coupled together through the non-local Biot-Savart law. Many authors have written about the different effects of each term, observed through numerical simulations \cite{HouLei}, the construction of special solutions \cite{Stuart1988,EJSI}, and the analysis of model problems \cite{CLM,DeGregorio1996,Con,OSW}. Since a finite-time singularity in the Euler equation can only happen if the magnitude of the vorticity $\omega$ becomes unbounded, many have highlighted the vortex stretching as \emph{the} source of a possible singularity. 

Wanting to understand better the qualitative nature of the vortex stretching term, Constantin, Lax and Majda \cite{CLM} introduced the following model set on $\mathbb{R}\times [0,T_*)$:
\begin{equation}\label{eq:CLMeq}
\left\{\begin{array}{l}
\pd_t\omega =-2H(\omega)\omega\\
H(\omega)(x)=\frac{1}{\pi}PV\int_{-\infty}^\infty \frac{\omega(y)}{x-y}dy.
\end{array}
 \right.
\end{equation} From the 3D Euler equation, one just drops the transport term, makes the system into a single equation, and approximates $\nabla u=\nabla\, \nabla\times (-\Delta)^{-1}\omega$ by a zeroth order operator. In one dimension, the natural choice is the Hilbert transform $H$.
With these simplifications, they solve explicitly the equation and prove that solutions can become singular in finite time. This allows one to speculate that singularity formation is possible even in the 3D Euler equation. There is at least one major problem, however, with this model: while the transport term does not change the magnitude of the vorticity, it can counteract the growing effects of the vortex stretching term. This can be seen easily in the following simple model: \[\partial_t\omega+\lambda(t) \sin(x)\partial_x\omega=\lambda(t)\omega,\] set on $[-\pi,\pi]$ with periodic boundary conditions and with $\lambda(t)$ time-dependent constant. It is not difficult to show that if $\omega_0$ is $C^1$ and vanishes at $0$ and $\pi$, then solutions to this equation are \emph{uniformly} bounded in time independent of the size of $\lambda(t)$. In particular, transport can act to "deplete" the growth effects of the vortex stretching term in this simple model. Thus, while the transport term cannot cause a singularity, it can stop it from happening. 

After the work \cite{CLM},  De Gregorio \cite{DeGregorio1996} introduced a model that takes into account both effect vortex stretching and transport,
\begin{equation}\label{eq:DeGregorio}
\left\{\begin{array}{l}
\pd_t\omega+2u\partial_x \omega=2\partial_x u \omega\\
u=-\Lambda^{-1}\omega(x)=-\int_0^xH\omega(y)dy.
\end{array}
 \right.
\end{equation}
Based on numerical simulations, De Gregorio conjectured by that the addition of the transport term should lead to global regularity. Strong evidence for this conjecture has been given in \cite{JSS} and global regularity for special kind of data was given in \cite{LeiLiuRen}. Inspired by this conjecture,  Okamoto, Sakajo, and Wunsch \cite{OSW} introduced a new model where they weight the transport term with a  parameter $a$.
\begin{equation}\label{eq:OSW}
\left\{\begin{array}{l}
\pd_t\omega+a\,u\partial_x \omega=2\partial_x u \omega\\
u=-\Lambda^{-1}\omega(x)=-\int_0^xH\omega(y)dy.
\end{array}
 \right.
\end{equation}The purpose of this model was to understand the effects of the modeled vortex stretching and transport terms. 
Hence, when $a=2$ we get the De Gregorio model and when $a=0$ we get CLM model.
In the same idea of \cite{CLM}, Cordoba, Cordoba and Fontelos \cite{CCF} introduced a 1D model to mimic the 2D quasi-geostrophic equation:
\begin{equation}\label{eq:CCF}
\left\{\begin{array}{l}
\pd_t\omega+2H(\omega)\omega (x,t) =-2\Lambda^{-1}\omega\pd_x\omega\in\RR\times[0,T_*)\\
H(\omega)(x)=\frac{1}{\pi}PV\int_{-\infty}^\infty \frac{\omega(y)}{x-y}dy,
\end{array}
 \right.
\end{equation}
which corresponds to $a=-2$ in the generalized model \eqref{eq:OSW}.

Recently, Elgindi and Jeong \cite{EJDG} proved the existence of a smooth self-similar profile for $a$ small by using a local continuation argument.
The goal here is to prove the stability of those profile for all $a$ small enough. The proof is based on the modualtion technique which has been developed by Merle, Raphael, Martel, Zaag and others. This technique has been very efficient to describe the formation of singularities for many problems like the nonlinear wave equation \cite{MR3302641}, the nonlinear heat equation \cite{MR1427848}, reaction diffusion systems \cite{MR3779644,MR3846237}, the nonlinear Schrodinger equation \cite{MR2150386,MR2257393}, the GKDV equation \cite{MR3179608}, and many others.
Note that for \eqref{eq:OSW} comparing to all the previous models cited above there exists a group of scaling transformations of dimension larger than two that leaves the equation invariant. Here this degeneracy is a real difficulty since one does not know in advance which scaling law the flow will select. We remark that similar results to Theorems \ref{theorem:main} and \ref{theorem:main2} were recently established in a work of Chen, Hou, and Huang \cite{CHH}. The authors of \cite{CHH} are also able to find a singularity for the De Gregorio ($a=2$) model on the whole line using a very interesting argument with computer assistance.  

\subsection*{Main Theorem}
We introduce first the following weighted space
$$L^2_\phi(\RR)=\{f\in L^2_{loc}(\RR):\int_\RR |f|^2\phi<\infty\},$$
equipped with the norm and inner product
$$\|f\|^2_{L^2_\phi}=(f\phi,f)_{L^2},$$
where $\phi=\frac{(1+y^2)^2}{y^4}$.
From now on we focus on \eqref{eq:OSW}.
In \cite{EJDG}, the authors show the existence of self-similar solutions of \eqref{eq:OSW} of the form
\begin{align}
\omega(x,t)=\frac{1}{T-t}F_a\Big(\frac{x}{(T-t)^{1+\gamma(a)}}\Big),
\end{align}
where $F_a$ solves 
\begin{align}\label{Faeq}
F_a+((1+\gamma(a))y-a\Lambda^{-1}F_a)F_a'+2HF_aF_a=0,
\end{align}
 and $\gamma(a)=a(-2+\ln(4))+O(a^2)$.
When $a=0$, the profile $F_0$ has the form: \[F_0(y)=\frac{y}{1+y^2}, \qquad HF_0(y)=-\frac{1}{y^2+1},\] while for $|a|$ small \[|\gamma(a)|+|F_a-F_0|_{H^3}\leq C |a|\] with $C>0$ a universal constant. In fact, we also have the following expanions:
\begin{align}\label{defF_a}
F_a(y)=\left\{\begin{array}{l}
y +O(y^3),  \quad y\leq1,\\
C_1|y|^{-\frac{1}{1+\gamma(a)}}+O(|y|^{-\frac{2}{1+\gamma(a)}}),  \quad y\geq1,
\end{array}
\right.
\end{align} 
\begin{align}\label{defHF_a}
HF_a(y)=\left\{\begin{array}{l}
-\frac{2+\gamma(a)}{2-a} +O(y^2),  \quad y\leq1,\\
C_2|y|^{-\frac{1}{1+\gamma(a)}}+O(|y|^{-\frac{2}{1+\gamma(a)}}),  \quad y\geq1.
\end{array}
\right.
\end{align}
The main result of this work is the dynamic stability of these blow-up profiles. In particular, this allows us to construct compactly supported solutions with local self-similar blow-up and cusp formation in finite time (a phenomenon numerically conjectured to occur in the case $a=-2$).

To prove the stability of the profiles $F_a$ we rescale \eqref{eq:OSW}.
A natural change of variables to do here will be 
\begin{align}\label{changeofvariable}
z&=\frac{x}{\lambda^{1+\gamma(a)}},\quad\quad\frac{ds}{dt}=\frac{1}{\lambda},\nonumber\\
 \omega(x,t)&=\frac{1}{\lambda}v\Big(\frac{x}{\lambda^{1+\gamma(a)}},s\Big).
\end{align}
Hence, in these new variables we get the following equation on $v$:
\begin{align}\label{CCF_ys}
\left\{\begin{array}{l}
v_s-\frac{\lambda_s}{\lambda}(v+(1+\gamma(a))zv_z) +2H(v)v=a\Lambda^{-1}vv_z\in\RR\times[0,\infty)\\
H(v)(y)=\frac{1}{\pi}PV\int_{-\infty}^\infty \frac{v(x)}{z-x}dx.
\end{array}
 \right.
\end{align}
Note that this change of variable leaves the $C^{\alpha(a)}$ norm of the velocity $u=-\Lambda^{-1}\omega$ unchanged, with $\alpha(a)=1-\frac{1}{1+\gamma(a)}$.
This indicates that the velocity $u$ will form a $C^\alpha$ cusp.
Note that \eqref{Faeq} is invariant under the following scaling:
$$ F_{a,\mu}(z):=F_a(\mu z).$$
We will make an abuse of notation by denoting $F_{a,\mu}$ by $F_a$.
Actually, this will induce an instability as one can see on the spectrum of the linearized operator around the profile $F_a$ in \eqref{eigenL}.
To fix this instability, we will allow $\mu$ to depend on time and fix it through an orthogonality condition.
Hence, we introduce
$$v(z)=w(\mu z),~~y=\mu z$$
where $w$ solves
\begin{align}\label{eqw}
w_s+\frac{\mu_s}{\mu}yw_y-\frac{\lambda_s}{\lambda}(w+(1+\gamma(a))yw_y) +2H(w)w=a\Lambda^{-1}ww_y,
\end{align}
and
$$\mathcal{S}_a(w)=w+(1+\gamma(a))y\pd_y w.$$
Now we linearize around $F_a$ by setting,
$$w=F_a+q,$$
where $q$ solves
\begin{align}\label{lineareq}
q_s+\frac{\mu_s}{\mu}y(F_a'+q_y)-\Big(\frac{\lambda_s}{\lambda}+1\Big)\mathcal{S}_a(F_a+q)=\MM q-2Hqq+a\Lambda^{-1}q q_y,
\end{align}
where
\begin{align}\label{defopL}
\MM q=-(2HF_a+1)q-2Hq F_a-((1+\gamma(a))y-a\Lambda^{-1}F_a)q_y+a\Lambda^{-1}qF_a'.
\end{align}
\begin{theorem}\label{theorem:main}
Let $a$ be small enough and $\tilde{\alpha}(a)=1-\frac{1}{1+\gamma(a)}$, then there exists an open set of odd initial data of the form $\omega(t=0)=F_a+q_0$ with $\pd_x q_0(0)=Hq_0 (0)=0$ and where 
\begin{align}
C_2\int_\RR|q_0(y)|^2\phi(y)dy+C_1\int_\RR|y\pd_y q_0|^2\phi(y)dy<\epsilon,
\end{align}
such that there exists $C(q_0)>0$ and $\omega$ verifies
$$\omega(x,t)=\frac{1}{\lambda(t)}\Bigg(F_a\Big(\frac{x\mu(t)}{\lambda^{1+\gamma(a)}}\Big)+q\Big(\frac{x\mu(t)}{\lambda^{1+\gamma(a)}},t\Big)\Bigg),$$
with $\frac{\lambda(t)}{T-t}\rightarrow C(q_0)$ and $\mu(t)\rightarrow\mu^*$ as $t\rightarrow T$ and
$$C_1\int_\RR|y\pd_y q(y,t)|^2 \phi(y)dy+C_2\int_\RR|q(y,t)|^2 \phi(y)dy\lesssim_\delta (T-t)^{1-\delta},$$  for any $\delta>0$.
When $a<0$ (so that $\gamma(a)>0$) we have $$\sup_{t\in[0,T]}\|u(t)\|_{C^{\tilde{\alpha}(a)}}<\infty, \mbox{ with } u(t,x)=-\Lambda^{-1}\omega.$$
\end{theorem}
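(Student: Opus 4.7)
The plan is to run a modulation-energy argument in the self-similar variables $(y,s)$ of \eqref{eqw}--\eqref{lineareq}. First I would fix the two modulation parameters $\lambda(s)$ and $\mu(s)$ via the orthogonality conditions $\partial_y q(s,0)=0$ and $Hq(s,0)=0$, which are preserved by the odd symmetry and were imposed on $q_0$. Evaluating \eqref{lineareq} at $y=0$ (and applying the Hilbert transform before evaluating) produces a $2\times 2$ linear system for $\mu_s/\mu$ and $\lambda_s/\lambda + 1$ whose coefficient matrix, by the explicit form of $F_a$ and $HF_a$ in \eqref{defF_a}--\eqref{defHF_a}, is invertible uniformly in $|a|$ small. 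This yields ODEs of the schematic form
\begin{align*}
\frac{\mu_s}{\mu}=N_1(q),\qquad \frac{\lambda_s}{\lambda}+1=N_2(q),
\end{align*}
with $|N_i(q)|$ controlled by the $L^2_\phi$-based perturbation norms.

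Next I would derive the core energy identity for the operator $\MM$ in the weighted space $L^2_\phi$ with $\phi(y)=(1+y^2)^2/y^4$. Writing the equation for $q$ as $q_s = \MM q + (\text{modulation terms}) + (\text{quadratic terms})$ and testing against $\phi q$, the crucial step is a coercivity estimate
\begin{align*}
-(\MM q,q)_{L^2_\phi}\ \geq\ \kappa\, \|q\|_{L^2_\phi}^2
\end{align*}
on the codimension-two subspace determined by the orthogonality conditions, for some $\kappa>0$ and $|a|$ small. For $a=0$ this reduces to an explicit computation using $F_0(y)=y/(1+y^2)$ and $HF_0(y)=-1/(1+y^2)$, together with commutator identities for $H$ and the pointwise algebraic identity $H(F_0 g)-F_0Hg = \text{something computable}$; the weight $\phi$ is engineered so that the transport term $-((1+\gamma(a))y-a\Lambda^{-1}F_a)q_y$ produces a friendly sign after integration by parts, while the zeroth-order piece $-(2HF_a+1)q$ contributes positively. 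The $|a|$ small case follows by perturbation. A parallel computation with $yq_y$ in place of $q$, based on commuting $y\partial_y$ through $\MM$, supplies the $H^1_\phi$-type bound needed in the statement.

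The quadratic terms $-2Hq\,q$ and $a\Lambda^{-1}q\,q_y$ are absorbed using $L^\infty$ bounds for $Hq$ and $\Lambda^{-1}q$ via weighted Hardy and Sobolev embeddings adapted to $\phi$, combined with the smallness of the perturbation; since $\phi$ blows up like $y^{-4}$ at the origin and behaves like $1$ at infinity, it enforces the vanishing at $0$ that makes these embeddings work. Together with the modulation bounds above, the inequality
\begin{align*}
\tfrac{d}{ds}\bigl(C_2\|q\|^2_{L^2_\phi}+C_1\|yq_y\|^2_{L^2_\phi}\bigr)\leq -2\kappa\bigl(C_2\|q\|^2_{L^2_\phi}+C_1\|yq_y\|^2_{L^2_\phi}\bigr)+\text{cubic}
\end{align*}
closes a bootstrap on a trapped set where the perturbation norm is $\leq \epsilon$. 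This gives exponential decay in $s$, which when translated back to $t$ through $\frac{ds}{dt}=1/\lambda$ yields the claimed $(T-t)^{1-\delta}$ decay for every $\delta>0$. Integrating the modulation ODEs produces $\lambda(t)/(T-t)\to C(q_0)$ and $\mu(t)\to \mu^*$, and the $C^{\tilde\alpha(a)}$ bound for $u$ in the case $a<0$ follows because \eqref{changeofvariable} is an isometry of $\dot C^{\tilde\alpha(a)}$ and the modulated profile plus the controlled perturbation both lie in that space by the tail expansions \eqref{defF_a}--\eqref{defHF_a}.

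The main obstacle is the coercivity of $\MM$ on the constrained subspace: $\MM$ is non-self-adjoint and nonlocal through $H$ and $\Lambda^{-1}$, the scaling exponent $\gamma(a)$ is only known implicitly, and the underlying two-parameter scale invariance introduces two neutral directions that must be removed precisely by the two orthogonality conditions. Getting a quantitative spectral gap uniformly for $|a|$ small — with a weight that simultaneously controls the behavior at $0$ (needed for the nonlinear terms) and at $\infty$ (needed to see the self-similar tail) — is where the calculation is delicate, and it is the reason for the specific choice $\phi=(1+y^2)^2/y^4$ and for the pairing of the $q$ and $y q_y$ estimates with carefully chosen constants $C_1,C_2$.
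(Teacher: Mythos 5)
Your proposal follows essentially the same strategy as the paper: fix the two modulation parameters by imposing $q_y(s,0)=Hq(s,0)=0$, establish a spectral-gap estimate $-(\MM q,q)_{L^2_\phi}\geq(\tfrac12-C|a|)\|q\|_{L^2_\phi}^2$ on that codimension-two subspace via the weighted Hilbert-transform identity and the explicit $a=0$ cancellation $\int Hq\,q\,F_0\,\phi=0$, commute $y\partial_y$ through $\MM$ for the derivative control, absorb the quadratic terms using $\|Hq\|_{L^\infty}$ and $\|\Lambda^{-1}q/y\|_{L^\infty}$ bounds, close a bootstrap, and then convert the exponential decay in $s$ and the ODE $\lambda_s/\lambda+1\to 0$ into $\lambda(t)/(T-t)\to C(q_0)$, $\mu\to\mu^*$, and the $(T-t)^{1-\delta}$ rate. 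This is exactly what is carried out in Sections 2--5, so no further commentary is needed.
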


\begin{remark}
The last statement of the theorem on the $C^\alpha$ regularity of the velocity field when $a<0$ is related to the $C^{1/2}$ conjecture made in \cite{Kis} and \cite{SV}.
\end{remark}

\begin{remark}
The assumptions of the theorem do not require that $q_0$ be differentiable everywhere (just that $q\in H^1$ and that $q$ vanishes to high order near $0$). Note also that the assumptions that $\partial_x q_0(0)=Hq_0(0)=0$ can be trivially removed using Lemma \ref{lemma:mod} since $F_a$ could be replaced by a slightly rescaled version of $F_a$ to make the perturbation and its Hilbert transform vanish to second order at 0. 
\end{remark}

\begin{remark}
Note that the open set of initial data contains slowly decaying solution, but also compactly supported solutions.
Indeed, one can impose that $q_0\sim-F_a$ at infinity.
\end{remark}
Our next theorem is in the same spirit as Theorem \ref{theorem:main}. The difference is that it applies to the non-smooth self-similar solutions constructed in \cite{EJDG} and, thus, applies even for $a$ large. Indeed, in \cite{EJDG}, the authors constructed a family of self-similar solutions to \eqref{eq:OSW} which are smooth functions of the variable $X=|x|^\alpha$ with speed $1+\tilde\gamma_\alpha(a)$ whenever $|a\alpha|$ is small enough. Denoting these solutions by $F_a^\alpha$, we have the following stability theorem. 
\begin{theorem}\label{theorem:main2}
Define \[\phi_*(Y)=\frac{(1+Y)^4}{Y^4}.\]  There exists $c_0>0$ so that if $a\in\RR$ and $\alpha>0$ satisfy $(|a|+1)\alpha<c_0$ and $\tilde{\beta}_\alpha(a)=1-\frac{1}{1+\tilde\gamma_\alpha(a)}$, then there exists an open set of odd initial data of the form $\omega(t=0)=F_a^\alpha+q_0$ with $\pd_X q_0(0)=Hq_0 (0)=0$ and where 
\begin{align}
C_2\int_\RR|q_0(Y)|^2\phi_*(Y)dY+C_1\int_\RR|Y\pd_Y q_0|^2\phi_*(Y)dY<\epsilon,
\end{align}
such that there exists $C(q_0)>0$ and $\omega$ verifies
$$\omega(x,t)=\frac{1}{\lambda(t)}\Bigg(F_a\Big(\frac{X\mu(t)}{\lambda^{1+\tilde\gamma_\alpha(a)}}\Big)+q\Big(\frac{X\mu(t)}{\lambda^{1+\tilde\gamma_\alpha(a)}},t\Big)\Bigg),$$
with $\frac{\lambda(t)}{T-t}\rightarrow C(q_0)$ and $\mu(t)\rightarrow\mu^*$ as $t\rightarrow T$ and
$$C_1\int_\RR|Y\pd_Y q(Y,t)|^2 \phi_*(Y)dy+C_2\int_\RR|q(Y,t)|^2 \phi_*(Y)dY\lesssim_\delta (T-t)^{1-\delta},$$ for any $\delta>0$ and 
$$\sup_{t\in[0,T]}\|u(t)\|_{C^{\tilde{\beta}_\alpha(a)}}<\infty, \mbox{ with } u(t,x)=-\Lambda^{-1}\omega.$$
\end{theorem}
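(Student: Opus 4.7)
The strategy is to mimic the proof of Theorem \ref{theorem:main} in the variable $X=|x|^\alpha$, treating $F_a^\alpha$ as a profile that is smooth in this new variable just as $F_a$ was smooth in $y$. The self-similar ansatz becomes
\begin{align*}
\omega(x,t)=\frac{1}{\lambda(t)}w\!\left(\frac{X\mu(t)}{\lambda^{1+\tilde\gamma_\alpha(a)}},s\right),\qquad X=|x|^\alpha,\quad \frac{ds}{dt}=\frac{1}{\lambda},
\end{align*}
and the goal is to derive, after linearizing around $F_a^\alpha$, the analogue of \eqref{lineareq} for the perturbation $q=w-F_a^\alpha$ expressed in $Y=X\mu/\lambda^{1+\tilde\gamma_\alpha(a)}$.

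First I would compute the effective equation for $w$ in the $X$-variable. Since the Biot-Savart type operators $H$ and $\Lambda^{-1}$ are defined with respect to $x$, the key computational task is to understand their action on odd functions that are smooth in $X=|x|^\alpha$. Following the representation developed in \cite{EJDG}, for this class of functions $H$ and $\Lambda^{-1}$ can be expressed as explicit integral operators in $X$ which, to leading order when $(|a|+1)\alpha$ is small, reduce to their $\alpha=0$ counterparts up to terms of size $O(\alpha)$. In particular the rescaled, linearized equation produces an operator $\mathcal{M}_a^{\alpha}$ that is a controlled perturbation of the operator $\MM$ in \eqref{defopL}, and the two-parameter scaling invariance of \eqref{Faeq} survives in the form $F_{a,\mu}^\alpha(Y):=F_a^\alpha(\mu Y)$.

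Second, I would run the modulation procedure: the direction $\partial_\mu F_a^\alpha\big|_{\mu=1}=Y\partial_Y F_a^\alpha$ is an unstable (neutral) eigendirection of $\mathcal{M}_a^\alpha$, and one fixes it by choosing $\mu(t)$ so that a prescribed orthogonality condition (the analogue of $\partial_X q(0)=Hq(0)=0$ at higher order) is maintained along the flow; the parameter $\lambda(t)$ is chosen to absorb the remaining scaling freedom and yields $\lambda(t)/(T-t)\to C(q_0)$. With these modulation parameters in hand one establishes the weighted energy inequality
\begin{align*}
\frac{d}{ds}\!\left(C_2\!\int_\RR |q|^2\phi_*\,dY+C_1\!\int_\RR |Y\partial_Y q|^2\phi_*\,dY\right)\le -\delta_0\,E(q)+\text{higher order},
\end{align*}
where coercivity of $\mathcal{M}_a^\alpha$ in $L^2_{\phi_*}$ on the space of perturbations vanishing to second order at the origin comes from a perturbation argument off the explicit $a=0,\alpha=0$ case. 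The weight $\phi_*(Y)=(1+Y)^4/Y^4$ is adapted to the behavior of $F_a^\alpha$, which is $\sim Y$ near the origin and decays like $Y^{-1/(1+\tilde\gamma_\alpha(a))}$ at infinity, exactly in parallel with \eqref{defF_a}. A standard bootstrap then promotes the linear dissipation estimate to the nonlinear decay $\int|q|^2\phi_*\,dY\lesssim_\delta (T-t)^{1-\delta}$.

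Finally, the $C^{\tilde\beta_\alpha(a)}$ bound on $u$ follows from the observation that the change of variables leaves the corresponding Hölder norm invariant: $\tilde\beta_\alpha(a)=1-1/(1+\tilde\gamma_\alpha(a))$ is precisely the exponent that renders the rescaling $x\mapsto x\mu(t)/\lambda^{1+\tilde\gamma_\alpha(a)}$ isometric for $u=-\Lambda^{-1}\omega$. The principal obstacle I expect is the analysis of $\mathcal{M}_a^{\alpha}$ and its spectral properties in $L^2_{\phi_*}$: the non-local operators $H$ and $\Lambda^{-1}$, rewritten in the variable $X$, pick up $\alpha$-dependent correction kernels which must be shown to be small perturbations of the model problem, uniformly in the weighted energy. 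Once this is achieved, the remainder of the argument follows the same modulation/energy scheme used for Theorem \ref{theorem:main}, only with $\phi$ and $\gamma(a)$ replaced by $\phi_*$ and $\tilde\gamma_\alpha(a)$ throughout.
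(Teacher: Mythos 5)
Your outline correctly identifies the change of variables $X=|x|^\alpha$, the introduction of $\mathcal{H}_\alpha$, the modulation/energy scheme with weight $\phi_*$, and the interpretation of $\tilde\beta_\alpha(a)$. But the central claim on which you hang the coercivity — that the nonlocal operators rewritten in the $X$-variable ``reduce to their $\alpha=0$ counterparts up to terms of size $O(\alpha)$,'' so that $\mathcal{M}_a^\alpha$ is a \emph{controlled perturbation} of the Theorem~\ref{theorem:main} operator $\MM$ of \eqref{defopL} — is false, and this is precisely where the real work of Theorem~\ref{theorem:main2} lies. The kernel
\[\frac{1}{\pi\alpha}\,\frac{Y^{2/\alpha-1}}{X^{2/\alpha}-Y^{2/\alpha}}\]
does not tend to the Hilbert kernel as $\alpha\to 0^+$; after symmetrization it tends pointwise to
\[K_0(X,Y)=-Y^2(X+1)^2\,\mathbf 1_{X<Y}-X^2(Y+1)^2\,\mathbf 1_{X\geq Y},\]
a genuinely different operator (note also that $\mathcal H_\alpha(F_0^{(\alpha)})\to -1/(1+Y)$, not $-1/(1+Y^2)$, and the weight changes from $(1+y^2)^2/y^4$ to $(1+Y)^4/Y^4$). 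Consequently there is no ``$\alpha=0$ Hilbert transform'' baseline to perturb off of, and the identity $\int HfF_0f\phi\,dy=0$ (Lemma~\ref{WeightedIdentity}) that made the Theorem~\ref{theorem:main} coercivity work has no $\alpha$-analogue by perturbation.

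What the paper actually does is establish the sign of the bilinear form $(\mathcal{H}_\alpha(q),q F_0^{(\alpha)}\phi_*)_{L^2}$ directly: symmetrize the kernel to $K_\alpha(X,Y)$, show $|K_\alpha-K_0|\lesssim \alpha\,(XY+X^2+Y^2)$ uniformly via the elementary bound $(\sigma-1)/(\sigma^{2/\alpha}-1)\leq \alpha/(2-\alpha)$ for $\sigma\geq 1$, and then verify by an exact integration by parts (using the vanishing $\int_0^\infty q/Y\,dY=0$) that the $K_0$-contribution is $\leq 0$. Only after this structural estimate is in place does one add the $a$-dependent terms as a perturbation of size $O((|a|+1)\alpha)$. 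Your proposal is missing this entire argument, and the ``perturbation off the $a=0,\alpha=0$ case'' you gesture at simply does not exist, because the $\alpha\to 0$ limit of $\mathcal M_0^\alpha$ is not the $\mathcal M_0$ of Section~\ref{Coercivity}. The correct statement is that $\mathcal M_a^\alpha$ is a small perturbation of $\mathcal M_0^\alpha$ (same $\alpha$, $a=0$), and the coercivity of $\mathcal M_0^\alpha$ itself requires the fresh kernel computation sketched above.
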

\begin{remark}
Note that in Theorem \ref{theorem:main2} we allow the parameter $a$ to be anything in $\RR$ but we pay the price on the regularity, since we need to pick $\alpha$ so that $|a\alpha|$ is small enough. Note also that $\tilde\beta_\alpha(a)\rightarrow 1$ as $\alpha\rightarrow 0$ for any fixed $a$. This means that as $\alpha\rightarrow 0$, the blow-up becomes more and more mild. 
\end{remark}
\begin{remark}
The proof of Theorem \ref{theorem:main2} is sketched in Section \ref{Proof of Theorem 2}; the only main difference between the proofs of Theorems \ref{theorem:main} and \ref{theorem:main2} is the coercivity of the linearized operator and an extra change of variables in the proof of Theorem \ref{theorem:main2}. 
\end{remark}

\subsection*{Organization of the paper}
In the Section \ref{Coercivity} we establish coercivity estimates for the linearized operator $\mathcal{M}_a$  under the assumption that the perturbation $q$ vanishes to high order at $0$ along with its Hilbert transform. This is the core of the argument. In the following Section \ref{Modulation}, we modulate the free parameters $\lambda$ and $\mu$ to propagate the vanishing condition on $q$. Then we prove long-time decay estimates on $q$ (in self-similar variables) in Section \ref{Energy} which show that the perturbation $q$ becomes small relative to the self similar profile as we approach the blow-up time. We establish Theorem \ref{theorem:main2} in Section \ref{Proof of Theorem 2}.
\section{Coercivity}\label{Coercivity}

\begin{proposition}\label{proposition:spectralgap}
There exists a universal constant $C>0$ so that if $a$ is small enough and if $f$ is odd, $f'(0)=Hf(0)=0$ and
$$\int_\RR|f|^2\phi(y)dy<+\infty,$$
\begin{align}\label{spectralgap}
\int_\RR f\MM f \phi(y)dy&\leq-\Big(\frac{1}{2}-C\,|a|\Big)\int_\RR f(y)^2 \phi(y)dy.
\end{align}
\end{proposition}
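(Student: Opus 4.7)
My plan is to decompose $\MM = L_0 + a\,\mathcal{R}_a$, where $L_0$ is the linearization at $a=0$ (replacing $F_a$ by $F_0(y)=y/(1+y^2)$, $HF_a$ by $HF_0=-1/(1+y^2)$, and $\gamma(a)$ by $0$), and $\mathcal{R}_a$ gathers all the $O(|a|)$ corrections. I first aim to prove the sharp identity
\[\int_\RR f\,L_0 f\,\phi\,dy = -\tfrac{1}{2}\int_\RR f^2\,\phi\,dy\]
for odd $f$ with $f'(0)=Hf(0)=0$ and adequate decay, and then control the perturbation by $|a|\,\|f\|_{L^2_\phi}^2$ with a universal constant.

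For the $a=0$ identity, I expand
\[L_0 f = -\tfrac{y^2-1}{1+y^2}f - \tfrac{2y}{1+y^2}Hf - y f_y\]
and integrate the transport term by parts (boundary terms vanish under the cubic vanishing of $f$ at $0$ and suitable decay at infinity), producing $\tfrac{1}{2}\int f^2(\phi+y\phi_y)\,dy$. The weight $\phi=(1+1/y^2)^2$ is designed so that the pointwise identity
\[-\tfrac{y^2-1}{1+y^2}\,\phi + \tfrac{1}{2}\bigl(\phi + y\phi_y\bigr) = -\tfrac{1}{2}\phi\]
holds, giving the local/transport part of the estimate exactly. This reduces the $a=0$ claim to showing that the Hilbert transform contribution $-2\int f\,Hf\bigl(\tfrac{1}{y}+\tfrac{1}{y^3}\bigr)\,dy$ vanishes under the orthogonality conditions.

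The key to that vanishing is the commutator identity $H(yg)=yHg-\tfrac{1}{\pi}\int g\,dy$ together with $Hg(0)=-\tfrac{1}{\pi}\int g(y)/y\,dy$. A short calculation using anti-self-adjointness of $H$ yields the general identity $\int (g/y)\,Hg\,dy = -\tfrac{1}{2\pi}\bigl(\int g/y\,dy\bigr)^2$; applied to $g=f$, the right side vanishes because $Hf(0)=0$. For the $1/y^3$ contribution, write $f=yp$ with $p=f/y$; then $\int p\,dy = -\pi Hf(0)=0$ collapses $Hf$ to $yHp$, so $\int f\,Hf/y^3\,dy$ becomes $\int p\,Hp/y\,dy = -\tfrac{1}{2\pi}\bigl(\int p/y\,dy\bigr)^2$, and $\int p/y\,dy = \int f/y^2\,dy$ vanishes by oddness of $f$. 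This establishes the sharp identity at $a=0$.

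The perturbation $a\,\mathcal{R}_a$ is then handled termwise, using the pointwise bounds $|F_a-F_0|+|HF_a-HF_0|+|\gamma(a)|\le C|a|$ coming from the expansions \eqref{defF_a}--\eqref{defHF_a} and the $H^3$ estimate of \cite{EJDG}. The zeroth order and transport corrections reduce to Cauchy--Schwarz in $L^2_\phi$ after the same integration by parts used for $L_0$, and the $\Lambda^{-1}$ terms can be moved off $f$ and onto the smooth profiles $F_a,F_a'$ by duality, with the vanishing of $f$ and $Hf$ at the origin absorbing the $|y|^{-4}$ singularity of $\phi$. The main obstacle is the term $-2\,Hf\,(F_a-F_0)$ paired against $f\phi$: because $\phi\notin A_2$, weighted $L^2$ boundedness of $H$ is unavailable, so I plan to exploit the refined vanishing $F_a-F_0=O(|a|\,y^3)$ near the origin (from matching the expansion of $F_a$ against $F_0=y-y^3+O(y^5)$) together with the same commutator identity used above, so that $Hf(0)=0$ cancels the worst singular piece and leaves a remainder controlled by $|a|\,\|f\|_{L^2_\phi}^2$.
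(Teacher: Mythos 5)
Your treatment of the $a=0$ operator is correct and parallels the paper's: you split $L_0$ into a local part, a transport part, and a Hilbert cross-term, integrate the transport term against $y\phi$ to produce $\tfrac{1}{2}\int f^2(\phi+y\phi_y)$, check the pointwise identity
\[-\tfrac{y^2-1}{1+y^2}\,\phi+\tfrac{1}{2}(\phi+y\phi_y)=-\tfrac{1}{2}\phi,\]
and kill the Hilbert cross-term $\int fHf\bigl(\tfrac{1}{y}+\tfrac{1}{y^3}\bigr)$ using the vanishing at the origin. The paper does the last step via the Tricomi identity $H(fHf)=\tfrac12\bigl((Hf)^2-f^2\bigr)$ evaluated and twice differentiated at $0$; your commutator-identity route ($\int \tfrac{g}{y}Hg\,dy=-\tfrac{1}{2\pi}\bigl(\int \tfrac{g}{y}\bigr)^2$ for $g(0)=0$, applied to $g=f$ and $g=f/y$) is equivalent and equally valid under the hypotheses.

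The gap is in the $O(a)$ correction. You correctly flag the pairing of $2Hf\,(F_a-F_0)$ against $f\phi$ as the problematic term, and correctly note that $\phi\notin A_2$, but the proposed fix --- exploiting $F_a-F_0=O(|a|\,y^3)$ near the origin plus the commutator identity --- does not close. After factoring out $y^3$ there is still a kernel of order $1/y$ on $(0,1)$, and bounding $\int_{|y|<1}|Hf||f|/y$ (and the corresponding tails, and the $\Lambda^{-1}f\,F_a'$ term) inevitably requires a bound on $\|Hf\|_{L^2_\phi}$ in terms of $\|f\|_{L^2_\phi}$ --- which is precisely what you said is "unavailable." What is missing is the paper's Lemma~\ref{WeightedIdentity}: under the very same conditions ($f$ odd, $f'(0)=Hf(0)=0$) one has the exact identity $\int|Hf|^2\phi=\int|f|^2\phi$. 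Its proof is the same commutator algebra you already use: write $\phi=1+2/y^2+1/y^4$, treat each power $y^{-k}$ separately, and for $k=4$ set $f=y^2g$ and observe that the two moment conditions $\int g=\int yg=0$ (from oddness and $Hf(0)=0$) give $H(y^2g)=y^2Hg$; then $\int|Hf|^2/y^4=\int|Hg|^2=\int|g|^2=\int|f|^2/y^4$. Once this conditional weighted isometry is in hand, all terms in $a\,\overline{\mathcal M}f$ are bounded uniformly by $C\int f^2\phi$ via $L^\infty$ control of $F_a-F_0$, $HF_a-HF_0$, $\gamma(a)/a$, $\Lambda^{-1}F_a/y$ and Cauchy--Schwarz in $L^2_\phi$; no refined pointwise expansion of $F_a-F_0$ is required. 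Isolating this lemma (rather than trying to cancel singularities term by term) is the step your proposal is missing.
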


The proof of this lemma requires a weighted identity for the Hilbert transform which we show in Lemma \ref{WeightedIdentity}.

\begin{proof}
We write: \[\MM f=-(2HF_a+1)f-2Hf F_a-((1+\gamma(a))y-a\Lambda^{-1}F_a)f_y+a\Lambda^{-1}fF_a'\]
\[=-(2HF_0+1)f-2Hf F_0-y f_y+a\overline\MM f.\]
Observe that if $a$ is small enough, there exists a universal constant $C>0$ (independent of $a$) so that we have the following estimate: \[\Big|\int_\mathbb{R}f\overline\MM f \phi(y)dy\Big|\leq C\int (|Hf|^2+|f|^2) \phi(y)dy\leq C\int |f|^2\phi(y)dy,\] using Lemma \ref{WeightedIdentity}. 
This follows from the following observation: \[|\frac{1}{\phi(y)}\partial_y(\phi(y)\Lambda^{-1} F_a)|_{L^\infty}+|{F_a}'|_{L^\infty}+\frac{|\gamma(a)|}{|a|}+\frac{1}{|a|}|F_a-F_0|_{L^\infty}+\frac{1}{|a|}|HF_a-HF_0|_{L^\infty}\leq C.\]
The only one which is not a direct consequence of the expansion given in \cite{EJDG} is the first one which we see can be estimated by:
\[|\frac{1}{\phi(y)}\partial_y \phi(y)\Lambda^{-1}F_a|_{L^\infty}+|HF_a|_{L^\infty}\leq |\frac{1}{y}\Lambda^{-1} F_a|_{L^\infty}+|HF_a|_{L^\infty}\leq C|HF_a|_{L^\infty}\leq C.\]
Thus, we must consider only the quantity: \[\int \Big((-2HF_0-1)f-2HfF_0-yf_y\Big)f \phi dy.\] First let us observe: \[\int Hf f F_0 \phi=0.\]
Indeed, \[\int Hf f F_0 \phi=\int fHf  \frac{y^2+1}{y^3}=\int \frac{fHf}{y}+\int \frac{fHf}{y^3}=H(fHf)(0)+\frac{1}{2}H(\partial_{yy} (fHf))(0)\] \[=\frac{1}{2} (H(f)^2(0)-f(0)^2)+\frac{1}{4}\partial_{yy}(H(f)^2-f^2)(0)=0,\] by the assumptions\footnote{Note that, strictly speaking, $fH(f)$ is not twice differentiable but the equality $\int Hf f F_0\phi=0$ can be made rigorous by applying the smoothing procedure in Lemma \ref{Smoothing}.} on $f$. This leaves us with:  \[\int (-2HF_0-1)f^2 \phi(y)-yf f_y \phi dy=\int \big(-2HF_0-1+\frac{1}{2}\frac{\partial_y (y\phi)}{w}\big) |f(y)|^2\phi(y)dy.\]
Next observe that \[-2HF_0-1+\frac{1}{2}\frac{\partial_y \phi}{\phi}=\frac{2}{1+y^2}-1+\frac{y^2-3}{2(y^2+1)}=-\frac{1}{2}.\]
This completes the proof. 
\end{proof}

\section{Modulation equation and derivation of the law}\label{Modulation}
Since our coercivity estimate from the previous section relies on $\partial_y q(0)=H(q)(0)=0$, we will use that we have the "free" parameters $\mu$ and $\lambda$ to fix these conditions. To find precisely how to do this, we will just differentiate \eqref{lineareq} with respect to $y$ and apply the Hilbert transform to \eqref{lineareq} and evaluate both at $y=0$. 
We will prove now by using the implicit function theorem that there exists a unique decomposition to the solution $w$ of \eqref{eqw}.
Indeed, in the following Lemma we fix $\mu$ and $\lambda$ such that $q_{y}(s,0)=Hq(0)=0$.
\begin{lemma}\label{lemma:mod}[Modulation]
For $q\in L^1_{loc}$ for which $Hq(0)$ and $q'(0)$ exist and \[|Hq(0)|+|q'(0)|\leq \frac{1}{2},\] there exists a unique pair $(\mu, \lambda)\in (0,\infty)^2$ so that 
$$\tilde q:=F_a(y)+q-\tilde{F_a}_{\mu,\lambda}$$
with $$\tilde{F}_{a,\mu,\lambda}(y)=\frac{1}{\lambda}F_a\Big(\frac{y\mu}{\lambda^{1+\gamma(a)}}\Big),$$
satisfies
$$\tilde{q}_y(0)=H\tilde{q}(0)=0.$$
 In fact, \[\lambda=1-H(q)(0)\frac{(2+\gamma(a))}{2-a}\qquad \text{and} \qquad \mu=(1+q'(0))\lambda^{2+\gamma(a)}.\] 
\end{lemma}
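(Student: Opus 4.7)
The plan is a direct, explicit computation rather than a genuine application of the implicit function theorem: the two conditions $\tilde q_y(0) = H\tilde q(0) = 0$ decouple, so $\lambda$ can be read off first and then $\mu$. First I would extract from the expansions \eqref{defF_a} and \eqref{defHF_a} the two values $F_a'(0) = 1$ and $HF_a(0) = -\frac{2+\gamma(a)}{2-a}$. Using the chain rule together with the dilation covariance $H(g(c\,\cdot))(y) = (Hg)(cy)$ of the Hilbert transform, valid for any $c>0$, one obtains
\[
\tilde F_{a,\mu,\lambda}'(0) = \frac{\mu}{\lambda^{2+\gamma(a)}}, \qquad H\tilde F_{a,\mu,\lambda}(0) = \frac{1}{\lambda}\,HF_a(0).
\]
The key observation — really the content of the lemma — is that $H\tilde F_{a,\mu,\lambda}(0)$ depends only on $\lambda$, not on $\mu$, because the inner dilation by $\mu/\lambda^{1+\gamma(a)}$ is invisible when the Hilbert transform is evaluated at the origin (a fixed point of dilations).

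Substituting into $\tilde q = F_a + q - \tilde F_{a,\mu,\lambda}$ and setting the two pointwise conditions to zero then yields the triangular system
\[
\frac{1}{\lambda} - 1 \;=\; \frac{Hq(0)}{HF_a(0)}, \qquad \mu \;=\; (1+q'(0))\,\lambda^{2+\gamma(a)}.
\]
The first equation is affine in $1/\lambda$ and has a unique solution; the second then determines $\mu$ uniquely. To leading order in the small data these agree with the closed-form expressions displayed in the statement. For existence I would check the positivity constraints: under the hypothesis $|Hq(0)| + |q'(0)| \le 1/2$ and the smallness of $a$ (which keeps $\frac{2-a}{2+\gamma(a)}$ close to $1$ and $|HF_a(0)|$ uniformly bounded away from zero), both $\lambda$ and $\mu$ come out close to $1$ and hence positive, while uniqueness is automatic from the explicit closed forms.

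There is no real obstacle in this lemma; the one substantive point is the decoupling observation above about $H\tilde F_{a,\mu,\lambda}(0)$ being $\mu$-independent, which is what lets us avoid solving a genuinely two-dimensional nonlinear system. A minor technical remark is that the identity $H(g(c\cdot))(y) = (Hg)(cy)$ requires $g$ to be regular enough for the principal-value integrals to be defined — for $F_a$ this is guaranteed by \eqref{defF_a} and \eqref{defHF_a}, and for $q$ the hypothesis that $q'(0)$ and $Hq(0)$ exist is exactly what legitimises the pointwise evaluations at $y=0$.
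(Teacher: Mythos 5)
Your argument is essentially the paper's own proof, just written out in full: the paper's proof records only the two facts $(\tilde F_{a,\mu,\lambda})'(0)=\frac{\mu}{\lambda^{2+\gamma(a)}}F_a'(0)$ and $H(\tilde F_{a,\mu,\lambda})(0)=\frac{1}{\lambda}HF_a(0)$, together with $F_a'(0)=1$ and $HF_a(0)=-\frac{2+\gamma(a)}{2-a}$, and leaves the reader to solve the same triangular system you wrote. Your observation that $H\tilde F_{a,\mu,\lambda}(0)$ is $\mu$-independent because $0$ is a fixed point of the inner dilation is precisely the mechanism that decouples the system, and it is the same point the paper is implicitly using.

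One thing should not be glossed over, though. Your claim that the resulting formulas agree to leading order with the closed forms displayed in the lemma is not correct. From your own relation $\frac{1}{\lambda}-1=\frac{Hq(0)}{HF_a(0)}=-Hq(0)\,\frac{2-a}{2+\gamma(a)}$ one gets
\[
\lambda=\Big(1-Hq(0)\,\frac{2-a}{2+\gamma(a)}\Big)^{-1}\approx 1+Hq(0),
\]
whereas the lemma asserts $\lambda=1-Hq(0)\,\frac{2+\gamma(a)}{2-a}\approx 1-Hq(0)$: the sign of the first-order term is opposite and the $a$-dependent coefficient is inverted. Your derivation is the correct one; the displayed $\lambda$ in the lemma statement appears to be a misprint (plausibly $\lambda$ written in place of $1/\lambda$ with the coefficient also inverted). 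You should therefore not appeal to leading-order agreement to reconcile them; instead simply record the exact value $\lambda=\bigl(1-Hq(0)\,\tfrac{2-a}{2+\gamma(a)}\bigr)^{-1}$, for which the hypotheses $|Hq(0)|+|q'(0)|\leq\frac{1}{2}$ and $|a|$ small directly give $\lambda,\mu\in(0,\infty)$ and uniqueness, which is all the remainder of the paper uses.
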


\begin{proof}[Proof of Lemma \ref{lemma:mod}]
We want to find $\mu,\lambda$ so that \[\tilde q:=F_a+q-\tilde F_{a\mu,\lambda}\] satisfies $\tilde q_y(0)=H\tilde q(0)=0$. 
Observe that \[ (\tilde F_{a\mu,\lambda})'(0)=\frac{\mu}{\lambda^{2+\gamma(a)}} F_a'(0)\qquad H(\tilde F_{a\mu,\lambda})(0)=\frac{1}{\lambda}HF_a(0)\] while \[F_a'(0)=1\qquad H F_a(0)=-\frac{2+\gamma(a)}{2-a}.\]

\end{proof}
Let $w_0,y\pd_y w_0$ be in $L^2_\phi(\RR)$ with a small enough norm and let $w$ be its corresponding solution.
Consequently, thanks to Lemma \ref{lemma:mod} the solution admits a unique decomposition on some time interval $[s_0,s^*):$ 
\begin{align}
w(y,s)=\tilde{F}_{a,\mu,\lambda}+q,
\end{align}
where $$q_{y}(s,0)=Hq(s,0)=0.$$

\subsection{The bootstrap regime} 

We will define first in which sense the solution is initial close to the self-similar profile. 

\begin{definition}[Initial closeness]\label{definition:ini} 
Let $\delta>0$ small enough, $s_0\gg 1$, and $w_0\in H^1_\phi$. We say that $w_0$ is initially close to the blow-up profile if there exists $\lambda_0>0$ and $\mu_0>0$ such that the following properties are verified. In the variables $(y,s)$ one has:
\begin{align}
w_0(y)=F+q_0,
\end{align}
and the remainder and the parameters satisfy:
\begin{itemize}
\item[(i)] \emph{Initial values of the modulation parameters:}
\begin{align} \label{parametersini}
\frac 12 e^{\frac{s_0}{2}}< \lambda_0 < 2 e^{\frac{s_0}{2}}, \ \ \frac 12 <\mu_0 < 2
\end{align}
\item[(ii)] \emph{Initial smallness:}
\begin{align} \label{qini}
\|y\pd_y q_0\|_{L^2_\phi}^2+\frac{1}{\delta}\|q_0\|_{L^2_\phi}^2< e^{-\frac{s_0}{8}}, 
\end{align}
\end{itemize}
\end{definition}

We are going to prove that solutions initially close to the self-similar profile in the sense of Definition \ref{definition:ini} will stay close to this self-similar profile in the following sense.

\begin{definition}[Trapped solutions] \label{definition:trap}

Let $K\gg 1$. We say that a solution $w$ is trapped on $[s_0,s^*]$ if it satisfies the properties of Definition \ref{definition:ini} at time $s_0$, and if it can be decomposed as  
$$w=F+q(y,s),$$
 for all $s\in [s_0,s^*]$ with:
\begin{itemize}
\item[(i)] \emph{Values of the modulation parameters:}
\begin{align} \label{parameterstrap}
\frac 1K e^{-\frac{s}{8}}< \lambda(s) < K e^{-\frac{s}{8}}, \ \ \frac 1K <\mu(s) < K.
\end{align}
\item[(ii)] \emph{Smallness of the remainder:}
\begin{align} \label{etrap}
\|y\pd_y q\|_{L^2_\phi}^2+\frac{1}{\delta}\|q\|_{L^2_\phi}^2< Ke^{-\frac s8},
\end{align}
\end{itemize}
\end{definition}

\begin{proposition} \label{proposition:bootstrap}
There exist universal constants $K,s_0^*\gg 1$ such that the following holds for any $s_0\geq s_0^*$. All solutions $w$ initially close to the self-similar profile in the sense of Definition \ref{definition:ini} are trapped on $[s_0,+\infty)$ in the sense of Definition \ref{definition:trap}.
\end{proposition}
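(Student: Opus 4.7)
The plan is a bootstrap/continuation argument in the spirit of Merle--Rapha\"el type modulation methods. Local well-posedness for \eqref{eqw} together with Lemma \ref{lemma:mod} gives a unique $C^1$ decomposition $w(\cdot,s)=\tilde F_{a,\mu(s),\lambda(s)}+q(\cdot,s)$ with $\pd_y q(s,0)=Hq(s,0)=0$ on a maximal interval $[s_0,s^*)$ contained in the lifespan of $w$. Set
$$
s^{**}:=\sup\bigl\{\,s\in[s_0,s^*):\ \text{\eqref{parameterstrap} and \eqref{etrap} hold on $[s_0,s]$ with constant $K$}\,\bigr\}.
$$
The task is to show that on $[s_0,s^{**}]$ all the bootstrap inequalities are in fact strictly improved (say, with $K$ replaced by $K/2$); by continuity this forces $s^{**}=s^*=+\infty$.

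The core of the argument is an energy identity for $q$, obtained by testing \eqref{lineareq} against $q\phi$ in $L^2$. The linear contribution $\int q\,\MM q\,\phi\,dy$ is negatively controlled by Proposition \ref{proposition:spectralgap}, crucially using the orthogonality $\pd_y q(0)=Hq(0)=0$ that the modulation is designed to preserve. The cubic nonlinear terms from $-2(Hq)q+a\Lambda^{-1}q\,q_y$ are absorbed by the $H^1_\phi$ smallness \eqref{etrap} and a weighted Hilbert bound. The modulation prefactors $\mu_s/\mu$ and $1+\lambda_s/\lambda$ come from modulation ODEs extracted by evaluating $\pd_y\eqref{lineareq}$ and $H\eqref{lineareq}$ at $y=0$: differentiating $\pd_y q(s,0)=Hq(s,0)=0$ in $s$ yields a $2\times 2$ linear system whose coefficient matrix is explicit at $a=0$ and perturbatively invertible for $|a|$ small by the formulas of Lemma \ref{lemma:mod}, producing
$$
\Big|\tfrac{\mu_s}{\mu}\Big|+\Big|1+\tfrac{\lambda_s}{\lambda}\Big|\lesssim \|q\|_{L^2_\phi}+\|y\pd_y q\|_{L^2_\phi},
$$
so the modulation corrections in the energy identity are higher order in the smallness.

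A parallel estimate is performed for $y\pd_y q$: apply $y\pd_y$ to \eqref{lineareq}, test against $y\pd_y q\,\phi$, and use a derivative-level coercivity obtained by commuting $y\pd_y$ through $\MM$ together with a second weighted Hilbert identity. Summing the two energies with weight $1/\delta$ yields an ODI
$$
\tfrac{d}{ds}\Big(\|y\pd_y q\|_{L^2_\phi}^2+\tfrac{1}{\delta}\|q\|_{L^2_\phi}^2\Big)\leq -c_0\Big(\|y\pd_y q\|_{L^2_\phi}^2+\tfrac{1}{\delta}\|q\|_{L^2_\phi}^2\Big)
$$
with a universal $c_0$ strictly larger than the bootstrap decay exponent, once $|a|$, $\delta$, and $K^{1/2}e^{-s_0^{}/16}$ are taken small enough. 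Gronwall then produces decay strictly beating the bound \eqref{etrap}. Plugging this improved decay into the modulation ODEs and integrating shows that $\mu$ stays close to $\mu_0$ and $\lambda$ stays well inside the window \eqref{parameterstrap}, strictly improving the parameter estimate for $s_0$ large.

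The main obstacle is the derivative-level coercivity. Commuting $y\pd_y$ with $\MM$ does not annihilate the nonlocal terms $-2HF_a\,q$ and $-2Hq\,F_a$, so the resulting commutators must be controlled via an identity for the Hilbert transform against the weight $\phi=(1+y^2)^2/y^4$; this weight is tuned precisely so that the boundary contributions at $y=0$ vanish once $\pd_y q(0)=Hq(0)=0$, which is why the modulation-enforced orthogonality is indispensable. Producing a uniform damping constant at the derivative level while tolerating the $O(|a|)$ perturbation of $F_a$ away from $F_0$ and the nonlinear errors (in particular, the non-diagonal coupling from $a\Lambda^{-1}F_a\,q_y$ and $a\Lambda^{-1}q\,F_a'$) is the technical heart of the proof.
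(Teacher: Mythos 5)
Your overall plan—bootstrap the energy $\mathcal{E}=\|y\pd_y q\|_{L^2_\phi}^2+\tfrac{1}{\delta}\|q\|_{L^2_\phi}^2$ through the coercivity of $\MM$, absorb the cubic nonlinearities by smallness, integrate the modulation ODEs, and conclude by a continuity argument—is exactly the paper's strategy (Sections \ref{Modulation}, \ref{Energy} supply the estimates, and the proposition itself is the plug-in and continuity step). But there is a real gap at the point where you assert the modulation corrections "are higher order in the smallness," supported by the bound $\big|\tfrac{\mu_s}{\mu}\big|+\big|1+\tfrac{\lambda_s}{\lambda}\big|\lesssim\|q\|_{L^2_\phi}+\|y\pd_y q\|_{L^2_\phi}$. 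The dominant modulation input to the energy identity is $\big(\tfrac{\lambda_s}{\lambda}+1\big)(F_a-yF_a')$ tested against $q\phi$, and $|F_a-yF_a'|_{L^2_\phi}$ is $O(1)$, not small; with your bound this term is of size $\mathcal{E}$ with an $O(1)$ constant, and after dividing by $\delta$ in the $\tfrac{1}{\delta}\|q\|_{L^2_\phi}^2$ piece it grows like $\mathcal{E}/\sqrt{\delta}$, which cannot be absorbed by the coercivity gap $-\tfrac{1}{2}$ since $\delta\ll 1$. So as written your energy ODI does not close.

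What saves the paper is the extra factor of $|a|$: Proposition \ref{proposition:derivationofthelaw} gives $\big|\tfrac{\lambda_s}{\lambda}+1\big|\leq C|a|\sqrt{\mathcal{E}}$, because every forcing term in the law for $\lambda$ (namely $H(\Lambda^{-1}F_a q_y)(0)$, $H(\Lambda^{-1}qF_a')(0)$, $H(\Lambda^{-1}q q_y)(0)$) arises from the transport-type pieces of \eqref{lineareq} that carry an explicit prefactor of $a$; at $a=0$ the modulation for $\lambda$ is unforced. (The two modulation conditions are also naturally triangular—first $\lambda$ from $Hq(0)=0$, then $\mu$—rather than a generic $2\times 2$ inversion, which is why this degenerate structure at $a=0$ is visible.) With the $|a|$ factor the modulation contribution is $O(|a|\mathcal{E})$, and the paper absorbs it by first fixing $\delta$ universally (to control the $C\|q\|_{L^2_\phi}^2$ term appearing in the $y\pd_y q$ estimate) and only then taking $|a|$ small. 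Without noticing the $|a|$ prefactor the bootstrap fails; once you insert it, the rest of your argument matches the paper.
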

Define for $\delta>0$ small enough:
\begin{align}\label{defenergy}
\mathcal{E}(s)=\|y\pd_y q\|_{L^2_\phi}^2+\frac{1}{\delta}\|q\|_{L^2_\phi}^2.
\end{align}

The proof of the proposition will be done later by using energy estimates. Before this we will derive that "law" that $\mu$ and $\lambda$ will satisfy. 

Indeed, we will prove the following
\begin{proposition}\label{proposition:derivationofthelaw}
To ensure that \[q_{y}(s,0)=Hq(0)=0,\] it suffices to impose that $\mu$ and $\lambda$ satisfy the following two relations:
\begin{align}\label{modbound}
\frac{\mu_s}{\mu}&=(2+\gamma(a))\Big(\frac{\lambda_s}{\lambda}+1\Big),\\
\Big(\frac{\lambda_s}{\lambda}+1\Big)\frac{2+\gamma(a)}{2-a}&=a\Big( H(\Lambda^{-1}F_a q_y)(0,s)+H(\Lambda^{-1}qF_a')(0,s)+H(\Lambda^{-1}qq_y)(0,s)\Big).\\
\Big|\frac{\lambda_s}{\lambda}+1\Big|&\leq C|a|\sqrt{\mathcal{E}}.
\end{align}
\end{proposition}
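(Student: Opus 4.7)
The plan is to derive the modulation laws by imposing that the orthogonality conditions $q_y(s,0)=0$ and $Hq(s,0)=0$ persist in time: differentiate \eqref{lineareq} in $y$ and evaluate at $y=0$ for one scalar equation, then apply the Hilbert transform to \eqref{lineareq} and evaluate at $y=0$ for a second. Since $(q_y)_s(0)=(Hq)_s(0)=0$ by these orthogonalities, the two scalar evaluations form a triangular $2\times 2$ system for $\mu_s/\mu$ and $\lambda_s/\lambda+1$, which inverts to give the stated identities.

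For the first relation I would systematically exploit the vanishing at $y=0$ of the quantities $q$, $q_y$, $Hq$, $F_a$, $\Lambda^{-1}q$, and $\Lambda^{-1}F_a$: the first three by assumption together with oddness of $q$, $F_a(0)=0$ by oddness of $F_a$, and the $\Lambda^{-1}$ vanishings from $\Lambda^{-1}g(y)=-\int_0^y Hg$. A direct term-by-term inspection then shows $(\MM q)_y(0)=(Hq\,q)_y(0)=(\Lambda^{-1}q\,q_y)_y(0)=0$, since every summand on the right-hand side contains at least one of these vanishing factors. The LHS collapses to $\frac{\mu_s}{\mu}\cdot F_a'(0)-(\frac{\lambda_s}{\lambda}+1)(2+\gamma(a))$, yielding the first relation after using $F_a'(0)=1$.

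For the second relation I would use the commutator identity $H(yf)(y)=yHf(y)-\frac{1}{\pi}\int f$ to show $H(yq_y)(0)=H(yF_a')(0)=0$ (both integrals vanish by decay at infinity), reducing the LHS to $-(\frac{\lambda_s}{\lambda}+1)HF_a(0)=(\frac{\lambda_s}{\lambda}+1)\frac{2+\gamma(a)}{2-a}$. On the right, the Tricomi-type bilinear identity
\[
H(fHg)(0)+H(gHf)(0)=Hf(0)\,Hg(0)-f(0)g(0),
\]
applied with $(f,g)=(q,F_0)$, $(q,F_a-F_0)$, and $(q,q)$ respectively, cancels in pairs the $a$-independent contribution $-2H(HF_0\,q)(0)-2H(F_0\,Hq)(0)$ of $H(\MM q)(0)$, the $O(a)$ contribution $-2H((HF_a-HF_0)q)(0)-2H((F_a-F_0)Hq)(0)$, and the nonlinear term $-2H(Hq\,q)(0)$, each cancellation using $q(0)=F_a(0)=F_0(0)=Hq(0)=0$. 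What survives on the right is exactly $a[H(\Lambda^{-1}F_a q_y)(0)+H(\Lambda^{-1}q F_a')(0)+H(\Lambda^{-1}q q_y)(0)]$, giving the stated second relation.

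For the bound $|\frac{\lambda_s}{\lambda}+1|\leq C|a|\sqrt{\mathcal E}$, I would use $H(g)(0)=-\frac{1}{\pi}\mathrm{PV}\!\int g(y)/y\,dy$ and Cauchy-Schwarz against $\phi$: a short computation shows $\|1/y\|_{L^2_{1/\phi}}<\infty$, so $|H(g)(0)|\lesssim \|g\|_{L^2_\phi}$ for $g$ vanishing appropriately at $0$. The uniform bounds $\|F_a'\|_{L^\infty}+\|\Lambda^{-1}F_a/y\|_{L^\infty}\lesssim 1$ (the second by boundedness of $HF_a$ from \eqref{defHF_a}), combined with $\|q\|_{L^2_\phi}+\|yq_y\|_{L^2_\phi}\lesssim\sqrt{\mathcal E}$, control each of the three RHS terms by $\sqrt{\mathcal E}$ in the trapped regime, which finishes the bound after multiplying through by $(2-a)/(2+\gamma(a))=O(1)$. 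The main algebraic obstacle is the Hilbert-transformed evaluation at $y=0$: without the Tricomi pair-cancellations, the $a$-independent terms from $H(\MM q)(0)$ would dominate the RHS and spoil the $O(|a|\sqrt{\mathcal E})$ smallness needed for the modulation parameters. A secondary technical point is the rigorous pointwise evaluation at $y=0$ of Hilbert transforms of products of functions in $L^2_\phi$, for which a smoothing argument in the spirit of the footnote in the proof of Proposition~\ref{proposition:spectralgap} suffices.
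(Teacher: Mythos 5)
Your proof is correct and follows the paper's approach: differentiate \eqref{lineareq} in $y$ and evaluate at $y=0$ for the $\mu$-equation, apply $H$ and evaluate at $y=0$ for the $\lambda$-equation (noting the $\mu_s/\mu$ term drops because $H(yg)(0)=-\tfrac{1}{\pi}\int g=0$ for decaying $g$, so the system is triangular), then close with Cauchy--Schwarz in $L^2_\phi$ together with $\|\Lambda^{-1}F_a/y\|_{L^\infty}, \|\Lambda^{-1}q/y\|_{L^\infty}\lesssim\|HF_a\|_{L^\infty},\|Hq\|_{L^\infty}$. The only difference is presentational: you invoke the Tricomi identity $H(fHg)(0)+H(gHf)(0)=Hf(0)Hg(0)-f(0)g(0)$ by name to kill the $a$-independent contributions, whereas the paper leaves the same cancellation implicit by recording the right-hand side as $A\,Hq(0)+B\,q_y(0)$ (and $-Hq(0)^2$, $2Hq(0)HF_a(0)$) and then sets $Hq(0)=q_y(0)=0$; the two are equivalent.
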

\begin{proof}

Dividing \eqref{lineareq} by $y$ and evaluating at $y=0$ and using that $q$ is odd we get: \[\partial_s (q_y(0,s))+\frac{\mu_s}{\mu}(F_a'(0)+q_y(0))-
\Big(\frac{\lambda_s}{\lambda}+1\Big)\partial_y\Big((1+(1+\gamma(a))y\partial_y)(F_a +q)\Big)\Big|_{y=0}\]\[=\partial_y\mathcal{M}_a q\Big|_{y=0}-2\partial_y(Hq q)\Big|_{y=0}+a\partial_y(\Lambda^{-1}q q_y)\Big|_{y=0}.\]
By inspection, using that $F_a'(0)=1$we see that \[\partial_s(q_y(0,s))+\frac{\mu_s}{\mu}-(2+\gamma(a))\Big(\frac{\lambda_s}{\lambda}+1\Big)=AHq(0,s)+Bq_y(0,s),\] for some $s$-dependent numbers $A, B$ depending only on $q$ ,$F_a,$ $\mu$ and $\lambda$. 
Before finding the second law, we note the following simple fact for decaying functions $f$ with $f_y\in L^1$: \[H(y\partial_y f)(0)=\int f_y=0.\] Now we apply $H$ to \eqref{lineareq} and evaluating at $y=0$ we will get the second law: \[\partial_s (Hq(0,s))-\Big(\frac{\lambda_s}{\lambda}+1\Big)(HF_a(0)+Hq(0,s))\]\[=-H(q)(0)+2H(q)(0,s)H(F_a)(0)+aH(\Lambda^{-1}F_a q_y)(0,s)+aH(\Lambda^{-1}qF_a')(0,s)-Hq(0,s)^2+aH\Big(\Lambda^{-1}(q)q_y\Big)(0,s)\]
In particular, if \[\Big(\frac{\lambda_s}{\lambda}+1\Big)\frac{2+\gamma(a)}{2-a}=a\Big( H(\Lambda^{-1}F_a q_y)(0,s)+H(\Lambda^{-1}qF_a')(0,s)\Big)+H(\Lambda^{-1}qq_y)(0,s),\] \[\frac{\mu_s}{\mu}=(2+\gamma(a))\Big(\frac{\lambda_s}{\lambda}+1\Big),
\]
and if $Hq(0,0)=q_y(0,0)=0$ we get \[Hq(0,s)=q_y(0,s)=0,\] for all $s\geq 0$. 
In addition, one get that
\begin{align}
\Bigg|\frac{\lambda_s}{\lambda}+1\Bigg|\lesssim \int_0^\infty\frac{|a|(|\Lambda^{-1}F_aq_y|+||\Lambda^{-1}q F'_a|)+|\Lambda^{-1}qq_y|}{y}dy.
\end{align}
By using that \[\Big|\frac{\Lambda^{-1}q}{y}\Big|_{L^\infty}\lesssim |H(q)|_{L^\infty}\lesssim |q|_{H^1}\lesssim \sqrt{\mathcal{E}(s)} \].
We deduce that,

$$\Bigg|\frac{\lambda_s}{\lambda}+1\Bigg|\leq |a|C\sqrt{\mathcal{E}(s)}.$$
\end{proof}

\section{Energy Estimates}\label{Energy}

The goal of this section is to establish energy estimates for $q$ in a suitable space. Let us first define our energy: \[\mathcal{E}(q)=\frac{1}{\delta}\int |q|^2 \phi(y)dy+\int |y\partial_y q|^2\phi(y)dy,\] where $\delta$ will be chosen to be small enough. 
We will prove that if $a$ is small enough, then 
\begin{align}\label{Energyineq}
\frac{d}{ds}\mathcal{E}(q)\leq -\frac{1}{4}\mathcal{E}(q)+C\mathcal{E}(q)^{3/2},
\end{align}

 for a universal constant $C>0$. 

Observe that from our choice of $\mu$ and $\lambda$ in Proposition \ref{proposition:derivationofthelaw}, we have the following estimate (again assuming that $a$ is small enough):
\[\Big|\frac{\lambda_s}{\lambda}+1\Big|\leq a\,C(\sqrt{\mathcal{E}(q)}+\mathcal{E}(q))\] for some universal constant $C>0$. Next, we use $\frac{\mu_s}{\mu}=(2+\gamma(a))\Big(\frac{\lambda_s}{\lambda}+1\Big)$ in \eqref{lineareq} to deduce
\begin{align}\label{MIeq}
&q_s+\Big(\frac{\lambda_s}{\lambda}+1\Big)(F_a-yF_a'+q-yq_y)=\MM q-2Hqq+a\Lambda^{-1}qq_y,
\end{align}
Taking the (weighted) inner product of \eqref{MIeq} with $q$ we get: \[\frac{1}{2}\frac{d}{ds}\|q\|_{L^2_\phi}^2\leq (\mathcal{M}_a q,q)+aC(\mathcal{E}(q)+\mathcal{E}(q)^{\frac{3}{2}})-2(H(q)q,q)_{L^2_\phi}+a(\Lambda^{-1} q q_y, q)_{L^2_\phi},\] where we have used that \[\Big|\frac{\partial_y(y\phi(y))}{\phi(y)}\Big|_{L^\infty}+|F_a-yF_a'|_{L^2_\phi}\leq C,\] for some universal constant $C$ independent of $a$.  Now, we have \[|H(q)|_{L^\infty}\leq C|q|_{H^1}\leq C\mathcal{E}(q).\] Furthermore, 
\[(\Lambda^{-1}q q_y, q \phi)=-\frac{1}{2}(q^2\partial_y(\phi\Lambda^{-1} q))=-\frac{1}{2}(q^2\Lambda^{-1}q,\partial_y \phi)-\frac{1}{2}(q^2,H(q)\phi).\]
Now observe that \[|\frac{\Lambda^{-1}q}{y}|_{L^\infty}\leq C|H(q)|_{L^\infty}\] and \[\Big|\frac{y\partial_y \phi}{\phi}\Big|\leq C.\] Thus, \begin{equation}\frac{1}{2}\frac{d}{ds}\|q\|_{L^2_\phi}^2\leq -\Big(\frac{1}{2}-C|a|\Big)|q|_{L^2_\phi}^2+|a|C\mathcal{E}(q)+C\mathcal{E}(q)^{\frac{3}{2}}.\end{equation}
Now we establish the first derivative estimate. First we apply $y\partial_y$ to \eqref{MIeq} and get: 

\[\partial_s(yq_y)+\Big(\frac{\lambda_s}{\lambda}+1\Big)(-y^2F_a''-y^2q_{yy})= \MM (y q_y)+[\MM,y\partial_y]q-2y\partial_y(Hqq)+ay\partial_y(\Lambda^{-1}qq_y).\]
Now we multiply by $y \phi q_y$ and integrate to get: \[\frac{1}{2}\frac{d}{ds}\|y\partial_yq\|^2_{L^2_\phi}\leq (\mathcal{M}_a (yq_y),yq_y)_{L^2_\phi}+([\mathcal{M}_a, y\partial_y]q, yq_y)_{L^2_\phi}-2(y\partial_y(Hqq),yq_y)_{L^2_\phi}+(ay\partial_y(\Lambda^{-1}q q_y),yq_y)_{L^2_\phi}\] \[+|a|C(\mathcal{E}(q)+\mathcal{E}(q)^{\frac{3}{2}}).\] Note that $yq_y$ is an odd function and $H(yq_y)(0)=0=(yq_y)_y(0).$ Thus, \[(\mathcal{M}_a (yq_y),yq_y)_{L^2_\phi}\leq -\Big(\frac{1}{2}-aC\Big)\|yq_y\|_{L^2_\phi}^2,\] using Proposition \ref{spectralgap}. 
It is also easy to see that, as before,  \[|(y\partial_y(Hqq),yq_y)_{L^2_\phi}|+|(y\partial_y(\Lambda^{-1}q q_y),yq_y)_{L^2_\phi}|\leq C\mathcal{E}(q)^{3/2}\] once we observe that $yH(\partial_y q)=H(y\partial_y q)$ and remember that $H$ is an isometry on odd functions in $L^2_\phi$ whose first derivative and Hilbert transform vanish at 0. Thus it remains to estimate the commutator term: \[([\mathcal{M}_a, y\partial_y]q, yq_y)_{L^2_\phi}.\] Note that $y$ and $\partial_y$ both commute with the Hilbert transform (when the argument of the Hilbert transform is an odd function). Let us first recall the form of $\MM:$  \[\MM q=-(2HF_a+1)q-2Hq F_a-((1+\gamma(a))y-a\Lambda^{-1}F_a)q_y+a\Lambda^{-1}qF_a'.\]
In particular, \[[\MM,y\partial_y]q=\sum_{i=1}^6 I_i,\] where \[I_1=-2q\, y\partial_y HF_a,\qquad I_2=-2Hq y\partial_y F_a, \qquad I_3=a \, q_y \,y\partial_y \Lambda^{-1} F_a,\]
\[I_4=-a\Lambda^{-1}F_a \partial_y q, \qquad I_5=a\Lambda^{-1} q yF_a'',\qquad I_6=aF_a'[\Lambda^{-1},y]\partial_yq.\]
Thus we see readily: \[|(I_1+I_2, yq_y)_{L^2_2}|\leq C|q|_{L^2_\phi}|yq_y|_{L^2_\phi}\Big(|y\partial_y F_a|_{L^\infty}+|y\partial_y HF_a|_{L^\infty}\Big)\leq C|q|_{L^2_\phi}|yq_y|_{L^2_\phi}.\]
Moreover, \[|(I_3+I_4+I_5, yq_y)_{L^2_\phi}|\leq |a| \mathcal{E}(q).\]
Now note that 
\begin{align}
[\Lambda^{-1},y]\partial_y q&=y\int_0^y \partial_z H(q)(z)dz -\int_0^y H(z\partial_z q)dz=yH(q)-\int_0^{y}H(z\partial_zq)\nonumber\\
&=yH(q)-H(yq)+\int_0^yH(q)=\int_0^y H(q).
\end{align}
Thus, \[|(I_6, yq_y)_{L^2_\phi}|=|(aF_a'\Lambda^{-1}q, yq_y)_{L^2_\phi}|=|a||(yF_a' \frac{1}{y}\Lambda^{-1}q, yq_y)_{L^2_\phi}|\leq |a| |yF_a'|_{L^2_\phi}\mathcal{E}(q)\leq |a|C\mathcal{E}(q).\]
Thus, we get:
 \begin{equation}\frac{1}{2}\frac{d}{ds}\|y\partial_yq\|^2_{L^2_\phi}\leq -\Big(\frac{1}{2}-C|a|\Big)|yq_y|_{L^2_\phi}^2+C|q|_{L^2_\phi}|yq_y|_{L^2_\phi}+C|a|\mathcal{E}(q)+C\mathcal{E}(q)^{3/2},\end{equation} with $C$ is a universal constant independent of $a$ (when $a$ is small enough). Now we first choose $a$ so small that we have: 
\[\frac{d}{ds}\|y\partial_yq\|^2_{L^2_\phi}\leq -\frac{1}{2}|yq_y|_{L^2_\phi}^2+C|q|_{L^2_\phi}^2+C|a|\mathcal{E}(q)+C\mathcal{E}(q)^{3/2},\]
\[\frac{d}{ds}\|q\|_{L^2_\phi}^2\leq -\frac{1}{2}|q|_{L^2_\phi}^2+|a|C\mathcal{E}(q)+C\mathcal{E}(q)^{\frac{3}{2}}.\] Next, we recall that \[\mathcal{E}(q)=\frac{1}{\delta}\|q\|_{L^2_\phi}^2+\|y\partial_yq\|^2_{L^2_\phi}.\] Thus we take $\delta$ so that $\frac{1}{\delta}>10C$ and use again that $a$ is small to see that \[\frac{d}{ds}\mathcal{E}(q)\leq -\frac{1}{4}\mathcal{E}(q)+C\mathcal{E}(q)^{3/2}.\]

Now  we prove the closure of the bootstrap.

\section{Proof of Proposition \ref{proposition:bootstrap}}
By using \eqref{Energyineq} and the bootstrap assumptions, one deduce that
\begin{align}
\frac{d}{ds}\Bigg(e^{\frac{s}{4}}\mathcal{E}(q)\Bigg)\leq CA_1^3e^{-\frac s8}.
\end{align}
Hence, by integrating the previous inequality between $s_0$ and $s$ we deduce that
\begin{align}\label{finalbound}
\mathcal{E}(s)\leq \mathcal{E}(s_0)e^{-\frac{(s-s_0)}{4}}+\frac{CA_1^3}{8}e^{-\frac s4}(e^{-\frac{s_0}{8}}-e^{-\frac{s}{8}}). 
\end{align}

Also, one has from \eqref{modbound} that
\begin{align}
&\frac{\lambda_s}{\lambda}+1=O\Big(e^{-\frac s8}\Big),\nonumber\\
&\frac{\mu_s}{\mu}=O\Big(e^{-\frac s8}\Big).
\end{align}
Hence, one can easily deduce that
\begin{align}\label{finalmodbound}
&\lambda(s)=Ce^{-s+O(e^{-\frac s8})},\nonumber\\
&\mu(s)=O\Big(e^{e^{-\frac s8}}\Big).
\end{align}

Let an initial datum satisfy \eqref{qini} at time $s_0$. Let $\tilde s$ be the supremum of times when the solution is trapped on $[s_0,\tilde s]$. Suppose that $\tilde s<+\infty$. Hence, from Definition \ref{definition:trap} and a continuity argument, one of the inequalities \eqref{parameterstrap} or \eqref{etrap} must be an equality at time $\tilde s$. This is  contradicting  \eqref{finalbound}, and \eqref{finalmodbound} for $K$ large enough which implies $\tilde s=+\infty$ and concludes the Proposition \ref{proposition:bootstrap}.

\section{Proof of Theorem \ref{theorem:main2}}\label{Proof of Theorem 2}
The proof of Theorem \ref{theorem:main2} is very similar to the proof of Theorem \ref{theorem:main} so we content ourselves with only giving a sketch. We discuss the two key elements which are different: a change of variables and the coercivity for the linearized operator. All the non-linear estimates are almost identical. 

We make the following change of variables in \eqref{eq:OSW}. First, since we are only looking at odd solutions, we consider the spatial domain to be $[0,\infty)$. For some $0<\alpha<1$ we define \[X=x^\alpha\] and set \[\omega(x,t)=\Omega(X,t)\] along with \[u(x,t)=x U(X,t).\]
Then the evolution equation in \eqref{eq:OSW} becomes:
\[\partial_t\Omega+a\alpha U X\partial_X \Omega=2U\Omega+2\alpha X\partial_XU \Omega.\]
Now let us study the relation between $U$ and $\Omega$. 
\[\partial_x(x U(X,t))=\partial_x u(x,t)=-H(\omega)(x,t)=-\frac{1}{\pi}PV\int_{-\infty}^\infty \frac{\omega(y,t)}{x-y}dy.\]
\[=-\frac{1}{\pi}PV\int_0^\infty \frac{y\omega(y,t)}{x^2-y^2}=-\frac{1}{\pi}PV\int_0^\infty \frac{y\Omega(Y,t)}{x^2-y^2}dy=-\frac{1}{\pi}\int_0^\infty \frac{Y^{1/\alpha}\Omega(Y,t)}{X^{2/\alpha}-Y^{2/\alpha}}d(Y^{1/\alpha})\]
\[=-\frac{1}{\pi\alpha}\int_0^\infty \frac{Y^{2/\alpha-1}}{X^{2/\alpha}-Y^{2/\alpha}}\Omega(Y,t)dY:=-\mathcal{H}_\alpha(\Omega)\] using the oddness of $\omega$. 
In particular, \[U+\alpha X\partial_X U=-\mathcal{H}_\alpha(\Omega).\]
Therefore, \[\partial_X(X^{1/\alpha}U)=-\frac{1}{\alpha}X^{1/\alpha-1}\mathcal{H}_\alpha(\Omega).\]
Now define\[\mathcal{L}_\alpha(f)=\frac{1}{\alpha}X^{-1/\alpha}\int_0^XY^{1/\alpha-1}f(Y)dY.\] Thus, \eqref{eq:OSW} becomes: 
\[\partial_t\Omega+a\alpha U X\partial_X \Omega=-2\Omega \mathcal{H}_\alpha(\Omega).\]
\begin{equation}\label{UtoOmega}U=-\mathcal{L}_\alpha \mathcal{H}_\alpha(\Omega).\end{equation}
\[\mathcal{H}_\alpha(\Omega)=\frac{1}{\pi\alpha}\int_0^\infty \frac{Y^{2/\alpha}}{X^{2/\alpha}-Y^{2/\alpha}}\frac{\Omega(Y,t)}{Y}dY.\]
Now, as shown in \cite{EJDG}, when $a=0$ for each $\alpha$, we have the following explicit self-similar profiles: \[\Omega(X,t)=\frac{1}{1-t}F^{(\alpha)}_{0}(\frac{X}{1-t}),\] where \[F^{(\alpha)}_{0}(Y)=-\frac{\sin(\frac{\alpha\pi}{2})Y}{1+2\cos(\frac{\alpha\pi}{2})Y+Y^2},\qquad \mathcal{H}_\alpha(F^{(\alpha)}_0)(Y)=\frac{1+\cos(\frac{\alpha\pi}{2})Y}{1+2\cos(\frac{\alpha\pi}{2})Y+Y^2}.\]
In particular, \[|\sin(\frac{\alpha\pi}{2})\tilde F_0-F_0^{(\alpha)}|_{H^3_Y}\leq C\alpha^2,\] where \[F_0(Y)=\frac{Y}{(1+Y)^2}\] as in the expression below \eqref{Faeq}. 
For the analysis we also need that \begin{equation}\label{Falphaexpansion} |\mathcal{H}_\alpha(F^{(\alpha)}_0)-\tilde HF_0|_{H^3_Y}\leq C\alpha,\end{equation} where\[\tilde H F_0(Y)=-\frac{1}{1+Y}.\] We also need that \[\alpha\|{\mathcal{H}_\alpha}\|_{H^1\rightarrow H^1}+ \|\mathcal{L}_\alpha\|_{H^1\rightarrow H^1}\leq C,\] where $C$ is independent of $\alpha$.
\subsection{Linearized Operator}
Following the proof of Theorem \ref{theorem:main}, we mainly need to establish coercivity properties of the main linearized operator. We thus content ourselves with establishing the analogue of Proposition \ref{proposition:spectralgap}.
We note that linearizing around $F_0^{\alpha}$  leads to \[\mathcal{M}_a^\alpha q=\mathcal{M}_0 q+P_{a}^\alpha q,\]
where 
\[\mathcal{M}_0^\alpha q=q+Y\partial_Y q+2\mathcal{H}_\alpha(F_0^{(\alpha)})q+2F_0^{(\alpha)}\mathcal{H}_\alpha(q).\]
and $P_a^\alpha(q)$ satisfies \[|(P_{a}^\alpha(q),q)_{\mathcal{E}}|\leq C(\alpha|a| +\alpha)\mathcal{E}(q)\] exactly as in the proof of Proposition \ref{proposition:spectralgap}. 

Now let us introduce the weight \[\phi_*(Y):=\frac{(Y+1)^4}{Y^4},\] which was used in \cite{E_Classical}. 
Then, recalling \eqref{Falphaexpansion}, we have \[(q+Y\partial_Yq+ 2\mathcal{H}_\alpha(F^{(\alpha)}_0)q, q \phi_*)\geq (\frac{1}{2}-C\alpha)|q\sqrt{\phi_*}|_{L^2}^2.\]
It remains to study $(\mathcal{H}_\alpha(q),q F_0^{(\alpha)} \phi_*)_{L^2}.$ 

\noindent {\bf Claim:} \[(\mathcal{H}_\alpha(q),q F_0^{(\alpha)} \phi_*)_{L^2}\geq C\alpha |q\sqrt{\phi_*}|_{L^2}^2.\]
Once the claim is established, the $L^2$ coercivity once $\alpha$ is small follows and the rest of the proof of Theorem \ref{theorem:main2} is similar to that of Theorem \ref{theorem:main}.
\[(\mathcal{H}_\alpha(q),q F_0^{(\alpha)} \phi_*)_{L^2}=\int_0^\infty\int_0^\infty\frac{Y^{2/\alpha}}{X^{2/\alpha}-Y^{2/\alpha}}\frac{(X+1)^2}{X^2}\frac{q(X)q(Y)}{XY}dXdY.\]
\[=\frac{1}{2}\int_0^\infty\int_0^\infty \frac{Y^{2/\alpha+2}(X+1)^2-X^{2/\alpha+2}{(Y+1)^2}}{X^{2/\alpha}-Y^{2/\alpha}}\frac{q(X)q(Y)}{X^3Y^3}dXdY.\] All we have done in the second equality is symmetrize the kernel. 
Now let us study the symmetrized kernel  \[K_\alpha(X,Y):=\frac{Y^{2/\alpha+2}(X+1)^2-X^{2/\alpha+2}{(Y+1)^2}}{X^{2/\alpha}-Y^{2/\alpha}}.\]
Observe that \[K_\alpha(X,Y)=\frac{\Big(\frac{Y}{X}\Big)^{1/\alpha}Y^2(X+1)^2-\Big(\frac{X}{Y}\Big)^{1/\alpha}X^2(Y+1)^2}{\Big(\frac{X}{Y}\Big)^{1/\alpha}-\Big(\frac{Y}{X}\Big)^{1/\alpha}}.\] 
Now, it is easy to see that $\lim_{\alpha\rightarrow 0^+} K(\alpha,X,Y)=-Y^2(X+1)^2{\bf 1}_{X<Y}-X^2(Y+1)^2{\bf 1}_{X\geq Y}$ on $\mathbb{R}^2\setminus\{X=Y\}$. Let us try to get some more quantitative information. 
By symmetry, we may restrict ourselves to the region where $X<Y.$ Observe that
\[K_{\alpha}(X,Y)+Y^2(X+1)^2=\Big(\frac{X}{Y}\Big)^{1/\alpha} \frac{Y^2(X+1)^2-X^2(Y+1)^2}{\Big(\frac{X}{Y}\Big)^{1/\alpha}-\Big(\frac{Y}{X}\Big)^{1/\alpha}}=\Big(\frac{X}{Y}\Big)^{1/\alpha} \frac{(2XY+X+Y)(Y-X)}{\Big(\frac{X}{Y}\Big)^{1/\alpha}-\Big(\frac{Y}{X}\Big)^{1/\alpha}}\]
\[=\frac{(2XY+X+Y)(Y-X)}{1-\Big(\frac{Y}{X}\Big)^{2/\alpha}}\]
\[=-X^2(2Y+1+\sigma)\frac{\sigma-1}{\sigma^{2/\alpha}-1},\] with $\sigma=\frac{Y}{X}$.
Defining $f(\sigma)=\sigma^{2/\alpha}-1$, let us note that $f,f',f''\geq 0$. Therefore, \[f(\sigma)\geq (\sigma-1)f'(1)=(\sigma-1)(\frac{2}{\alpha}-1)\] if $\sigma\geq 1$.
Thus, 
\[\frac{\sigma-1}{\sigma^{2/\alpha}-1}\leq \frac{\alpha}{2-\alpha}.\]
Consequently, 
\[|K_{\alpha}(X,Y)+Y^2(X+1)^2|\leq 2YX^2\alpha +2X^2\alpha+2XY\alpha \] if $\alpha\leq 1.$
Now let us note that \[\int_0^\infty \frac{|q|(X)}{X}\int_X^\infty \frac{|q|(Y)}{Y^3}dYdX\leq |q\sqrt{\phi_*}|_{L^2}\int_0^\infty \frac{|q|(X)}{|X|^2}dX=\int_0^\infty \frac{|q|(X)(X+1)^2}{X^2} \frac{1}{(X+1)^2}dX\leq 10|q\sqrt{\phi_*}|_{L^2}^2.\] The claim now follows once we show that \[\int_0^\infty\int_X^\infty Y^2(X+1)^2 \frac{q(X)q(Y)}{Y^3X^3}dXdY\leq 0,\] whenever $q(0)=q'(0)=\int_0^\infty \frac{q(Y)}{Y}dY=0.$
Indeed, 
\[\int_0^\infty\int_X^\infty Y^2(X+1)^2\frac{q(X)q(Y)}{Y^3X^3}dXdY=\int_0^\infty \frac{q(X)}{X}\frac{(X+1)^2}{X^2}\int_X^\infty \frac{q(Y)}{Y}dY\]\[=-\frac{1}{2}\int_0^\infty \frac{d}{dX}\Big(\int_X^\infty\frac{q(Y)}{Y}dY\Big)^2\frac{(X+1)^2}{X^2}dX\]\[=-\frac{1}{2}\int_0^\infty \frac{d}{dX}\Big(\int_X^\infty \frac{q(Y)}{Y}dY\Big)^2\Big(1+\frac{2}{X}+\frac{1}{X^2}\Big)dX\]\[=-\frac{1}{2}\int_0^\infty \Big(\int_X^\infty \frac{q(Y)}{Y}dY\Big)^2\Big(\frac{2}{X^2}+\frac{2}{X^3}\Big)\leq 0\]
\section{Appendix}

\subsection*{Weighted Identities}
\begin{lemma}\label{WeightedIdentity}
Let $\phi(y)=\frac{(1+y^2)^2}{y^4}.$ For all $f\in C_c^\infty(\mathbb{R})$ odd on $\mathbb{R}$ and satisfying $f'(0)=Hf(0)=0$, we have: \[\int |Hf(y)|^2 \phi(y)dy=\int |f(y)|^2\phi(y)dy.\] 
\end{lemma}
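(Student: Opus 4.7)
The plan is to expand the weight as $\phi(y) = 1 + 2y^{-2} + y^{-4}$ and reduce the claimed identity to the three separate equalities
\begin{equation*}
\int |Hf|^2\, dy = \int |f|^2\, dy, \qquad \int \frac{|Hf|^2}{y^2}\, dy = \int \frac{|f|^2}{y^2}\, dy, \qquad \int \frac{|Hf|^2}{y^4}\, dy = \int \frac{|f|^2}{y^4}\, dy.
\end{equation*}
The first is just Plancherel (since $H$ is an $L^2$ isometry on odd functions). The other two will come from Plancherel applied to a ``commutation with division by $y$'' identity for $H$.

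The key algebraic identity I would establish first is: for sufficiently regular $g$,
\begin{equation*}
\frac{Hg(y)}{y} = H\!\left(\frac{g(\cdot)}{\cdot}\right)(y) + \frac{Hg(0)}{y}.
\end{equation*}
This is immediate from the partial-fraction decomposition $\frac{1}{y(y-z)} = \frac{1}{z}\bigl(\frac{1}{y-z} - \frac{1}{y}\bigr)$, which, after integration against $g(z)/\pi$, produces exactly these two terms (the second being $\frac{1}{\pi y}\int g(z)/z\,dz = -Hg(0)/y$).

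Applying this with $g = f$ and invoking the hypothesis $Hf(0) = 0$ yields $Hf(y)/y = H(f/\cdot)(y)$, and then Plancherel gives the $1/y^2$ identity. For the $1/y^4$ identity I would iterate: applying the same commutation identity now to $g(y) = f(y)/y$ produces
\begin{equation*}
\frac{Hf(y)}{y^2} = H\!\left(\frac{f(\cdot)}{\cdot^2}\right)(y) + \frac{H(f/\cdot)(0)}{y}.
\end{equation*}
The boundary term here is $H(f/\cdot)(0) = -\frac{1}{\pi}\int f(z)/z^2\,dz$, and this vanishes because the integrand is odd (as $f$ is) and genuinely integrable at the origin thanks to $f'(0) = 0$, which forces $f(z) = O(z^3)$ near $0$. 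A final application of Plancherel gives the $1/y^4$ identity, and summing the three equalities with coefficients $1,2,1$ completes the proof.

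The only real thing to watch — rather than a genuine obstacle — is justifying the partial-fraction manipulation in the principal-value sense and checking convergence of each integral at $0$ and $\infty$. The two vanishing conditions $f'(0) = Hf(0) = 0$ together with the oddness of $f$ are exactly what kill the boundary contributions at each stage of the iteration and render all of the integrals absolutely convergent, so the argument should be clean bookkeeping.
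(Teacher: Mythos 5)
Your proof is correct and takes essentially the same route as the paper: both decompose the weight as $1+2y^{-2}+y^{-4}$, reduce to the $L^2$-isometry of $H$, and move powers of $y$ through $H$ via a partial-fraction identity whose boundary terms are killed by the hypotheses $Hf(0)=0$ and (oddness plus) $f'(0)=0$. The only cosmetic difference is that you iterate the single-power commutation $\frac{Hg(y)}{y}=H\bigl(\frac{g(\cdot)}{\cdot}\bigr)(y)+\frac{Hg(0)}{y}$ twice, whereas the paper sets $f=y^2g$ and pushes $y^2$ through $H$ in a single step.
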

\begin{proof}
Note that $w(y)=1+\frac{2}{y^2}+\frac{1}{y^4}.$ Thus, it suffices to show that 
\[\int \frac{|Hf(y)|^2}{y^k}dy=\int \frac{|f(y)|^2}{y^k}dy\] under the condition that $f$ is odd and $f'(0)=Hf(0)=0$ for $k=0,2,4$. $k=0$ is just the isometry property of $H$. The case $k=2$ and $k=4$ are similar so we only do the more difficult case of $k=4$. 
Let us write $f=y^2 g$.  
Observe that, by assumption, we have $0=Hf(0)=\int y g$ and $\int g=0$ since $g$ is odd. Thus, \[H(y^2 g)=y^2 H(g)\]
In particular, we have: \[\int \frac{|Hf|^2}{y^4}dy=\int |H(g)|^2=\int |g|^2=\int \frac{|f|^2}{y^4}.\]
\end{proof}
\subsection*{Smoothing Procedure}

We now give a lemma which allows us to justify some of the computations in the coercivity and energy estimates. 
\begin{lemma}\label{Smoothing}
Let $q$ be such that $q$ is odd, $H(q)=q'(0)=0$ and $\mathcal{E}(q)<\infty$. Then, there exists a sequence $q_n\in \mathcal{S}(\mathbb{R})$ with 
\begin{itemize}
\item $q_n$ is odd and $q_n'(0)=Hq_n(0)=0$
\item $\mathcal{E}(q_n)\leq 2\mathcal{E}(q)$
\item $q_n\rightarrow q$ uniformly on $\mathbb{R}$. 
\end{itemize}
\end{lemma}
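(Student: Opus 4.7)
The plan is a three-step approximation: cut $q$ off both near $0$ and near $\infty$ with an annular cutoff, mollify to enter the Schwartz class, and then add a one-parameter correction to restore $Hq_n(0)=0$ — the derivative condition $q_n'(0)=0$ will come for free from the cutoff. First note that $\mathcal{E}(q)<\infty$ forces $q\in H^1(\mathbb{R})$, since $\phi\geq 1$ gives $q\in L^2$ and $\int|yq_y|^2(1+y^2)^2/y^4\,dy<\infty$ gives $q_y\in L^2$. Hence $q$ is continuous with $q(\pm\infty)=0$, and weighted Cauchy--Schwarz gives
\[
\int_0^\infty \frac{|q(y)|}{y}\,dy\leq \|q\|_{L^2_\phi}\cdot \Big\|\frac{y}{1+y^2}\Big\|_{L^2(0,\infty)}<\infty.
\]

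Let $\rho:\mathbb{R}\to[0,1]$ be smooth and even with $\rho(t)=0$ for $|t|\leq 1$ and $\rho(t)=1$ for $|t|\geq 2$, and define $\rho_n(y):=\rho(ny)\rho(y/n)$. Then $q^0_n:=q\rho_n$ is odd and supported in $\{1/(2n)\leq|y|\leq 2n\}$. The bound $|y\rho_n'|\leq C$ together with the fact that $\rho_n'$ is supported on shrinking/expanding annuli on which $\int |q|^2\phi+|yq_y|^2\phi\to 0$ yields, after a Cauchy--Schwarz expansion of $|y(q\rho_n)_y|^2$,
\[
\|q^0_n-q\|_{L^2_\phi}\to 0,\qquad \mathcal{E}(q^0_n)\to\mathcal{E}(q).
\]
Now let $\eta\in C_c^\infty(\mathbb{R})$ be an even nonnegative bump of integral $1$ and set $q^1_n:=q^0_n*\eta_{\epsilon_n}$ with $\epsilon_n$ so small that $q^1_n$ is supported in $|y|\geq 1/(4n)$. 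Then $q^1_n\in C_c^\infty(\mathbb{R})\subset\mathcal{S}(\mathbb{R})$ is odd, and since $\phi$ is bounded on the support of $q^1_n$, standard unweighted mollification combined with this bound gives $\|q^1_n-q^0_n\|_{L^2_\phi}+|\mathcal{E}(q^1_n)-\mathcal{E}(q^0_n)|\to 0$ as $\epsilon_n\to 0$. For a suitable choice of $\epsilon_n$ we thus obtain $q^1_n\to q$ uniformly and in $L^2_\phi$, $\mathcal{E}(q^1_n)\to \mathcal{E}(q)$, and $(q^1_n)'(0)=0$ automatically.

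The remaining defect is $b_n:=H(q^1_n)(0)$. Since $Hq(0)=0$,
\[
|b_n|=|H(q^1_n-q)(0)|\leq C\,\|q^1_n-q\|_{L^2_\phi}\to 0,
\]
using the weighted Cauchy--Schwarz above applied to $f=q^1_n-q$. Fix any odd Schwartz profile $\psi$ with $\psi'(0)=0$ and $H\psi(0)\neq 0$ — for instance $\psi(y)=y^3 e^{-y^2}$, which satisfies $\psi'(0)=0$ and $H\psi(0)=-\tfrac{1}{2\sqrt{\pi}}$ — and set $q_n:=q^1_n-\tfrac{b_n}{H\psi(0)}\psi$. Then $q_n\in \mathcal{S}(\mathbb{R})$ is odd, $q_n'(0)=0$, $Hq_n(0)=0$ by construction, $q_n\to q$ uniformly (the correction is $O(b_n)$ in $L^\infty$), and $\mathcal{E}(q_n)\to \mathcal{E}(q)$, so $\mathcal{E}(q_n)\leq 2\mathcal{E}(q)$ for $n$ large.

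The main obstacle would ordinarily be the mollification in the singular weighted norm $L^2_\phi$, where the weight blows up like $y^{-4}$ at the origin; the annular cutoff sidesteps this entirely by pushing the support of each approximation away from $y=0$, so only standard unweighted mollification on a compact set is ever invoked. Everything else reduces to dominated convergence applied to $|q|^2\phi$ and $|yq_y|^2\phi$, and to the single fact that the defect $Hq(0)$ can be corrected by a fixed odd Schwartz profile with $\psi'(0)=0$.
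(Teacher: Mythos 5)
Your proof is correct and reaches the same endpoint as the paper's by a slightly different route. The paper first mollifies $q$ by a Gaussian and then multiplies by the factor $\tfrac{ny^2}{ny^2+1}$, which vanishes quadratically at the origin and so forces $(q^1_n)'(0)=0$; you instead premultiply by an annular cutoff $\rho(ny)\rho(y/n)$ and then mollify by a compactly supported bump whose width is small relative to $1/n$. The final correction step — subtracting $\tfrac{H(q^1_n)(0)}{H\psi(0)}\psi$ with $\psi(y)=y^3e^{-y^2}$ — is identical in both proofs, and your computation $H\psi(0)=-\tfrac{1}{2\sqrt{\pi}}$ is correct, as is the weighted Cauchy--Schwarz bound $|Hf(0)|\lesssim\|f\|_{L^2_\phi}$. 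Your version has two small advantages. First, it genuinely lands in $C_c^\infty\subset\mathcal{S}$ at the mollification step; the paper's $\phi_n*q$ need not be Schwartz when $q$ is only $H^1$ (no rapid decay at infinity), so strictly speaking the paper's proof needs an additional cutoff or decay argument there. Second, the annular cutoff pushes the support away from $y=0$ before mollifying, so you only ever invoke unweighted mollification on sets where $\phi$ is bounded; the paper implicitly relies on mollification being well behaved in the singular weight, which is true but less immediate. Both approaches are continuity arguments (dominated convergence for the truncation and mollification limits, plus a one-parameter linear correction to restore the $Hq_n(0)=0$ constraint), and either establishes the lemma.
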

\begin{proof}
Take $q_n^1(y)=\frac{ny^2}{ny^2+1}\phi_n*q,$ where $\phi_n=\frac{n}{\sqrt{\pi}}\exp(-n^2x^2).$ Clearly, $q_n^1$ is odd and ${q_n^1}'(0)=0$. Moreover, $q_n^1\in\mathcal{S}(\mathbb{R})$ and $q_n^1\rightarrow q$ uniformly on $\mathbb{R}$. It may be, however, that $H(q_n^1)(0)\not =0$. Now let's define the function \[\psi(y)=y^3\exp(-y^2).\] Clearly, $\psi\in\mathcal{S}$ and $\mathcal{E}(\psi)<\infty$. Moreover, \[H(\psi)(0)\not=0.\]
Thus we define: \[q_n(y)=q_n^1(y)-\frac{H(q_n^1)(0)}{H(\psi)(0)}\psi(y).\] Clearly, $q_n$ is odd and $q_n'(0)=Hq_n(0)=0$. Now let's compute $\mathcal{E}(q_n)$.
First, $H(q_n^1)(0)\rightarrow 0$ as $n\rightarrow\infty$ by the dominated convergence theorem and $\mathcal{E}(\psi)<\infty$. Thus, the second term in the definition of $q_n$ converges uniformly to $0$ in $\mathbb{R}$ in the energy norm. It is also easy to see that if $n$ is large enough, $\mathcal{E}(q_n^1)\leq 2\mathcal{E}(q).$
\end{proof}

\bibliographystyle{plain}
\bibliography{3dEuler_away_final2}

\end{document}